\flushbottom \setlength{\parindent}{16pt}
\newtheorem{theorem}{Theorem}[section]
\newtheorem{lemma}[theorem]{Lemma}
\newtheorem{remark}[theorem]{Remark}
\newtheorem{cor}[theorem]{Corollary}
\newtheorem{prop}[theorem]{Proposition}
\def\cP{{\mathcal P}}
\def\cN{\mathcal N}
\def\cBH{\mathcal {BHB}}
\def\cP{\mathcal {LMPTB}}
\def\F{\mathbb F}
\def\S{\mathbb S}
\def\V{\mathbb V}
\def\S{\mathbb S}
\def\ps@headings{
 \def\@oddhead{\footnotesize\rm\hfill\runningheadodd\hfill\thepage}
 \def\@evenhead{\footnotesize\rm\thepage\hfill\runningheadeven\hfill}
 \def\@oddfoot{}
 \def\@evenfoot{\@oddfoot}
}
\title{On isotopisms and strong isotopisms of commutative presemifields}
\author{G. Marino\thanks{This work was
supported by the Research Project of MIUR (Italian Office for
University and Research) ``Geometrie su Campi di Galois, piani di
traslazione e geometrie di incidenza''.} \quad and \ \ O. Polverino$^*$}
\date{}
\begin{document}
\maketitle
\begin{abstract}
In this paper we prove that the $P(q,\ell)$ ($q$ odd prime power and
$\ell>1$ odd) commutative semifields constructed by Bierbrauer in
\cite{BierbrauerSub} are isotopic to some commutative presemifields
constructed by Budaghyan and Helleseth in \cite{BuHe2008}. Also, we
show that they are strongly isotopic if and only if $q\equiv
1(mod\,4)$. Consequently, for each $q\equiv -1(mod\,4)$ there exist
isotopic commutative presemifields of order $q^{2\ell}$ ($\ell>1$
odd) defining CCZ--inequivalent planar DO polynomials.
\end{abstract}

\section{Introduction}

A finite \textit{semifield} $\mathbb{S}$ is a finite binary
algebraic structure satisfying all the axioms for a skewfield except
(possibly) associativity of multiplication. If $\mathbb{S}$
satisfies all axioms for a semifield except the existence of an
identity element for the multiplication, then we call it a
\textit{presemifield}. The additive group of a presemifield is an
elementary abelian $p$--group, for some prime $p$ called the {\it
characteristic} of $\S$.

The definition of nuclei and center of a semifield can be found,
for instance, in \cite[Sec. 5.9]{JoJhBi2007}.  A finite semifield is a
vector space over its nuclei and its center. Two presemifields,
say $\S_1=(\S_1,+,\bullet)$ and $\S_2=(\S_2,+,\star)$ of characteristic $p$, are said to
be \textit{isotopic} if there exist three $\F_p$--linear
permutations $M,N,L$ from $\S_1$ to $\S_2$ such that
%\begin{equation}\label{form:isotopy}
$$
M(x) \star N(y)=L(x \bullet y)
$$
%\end{equation}
for all $x,y \in \S_1$. The triple $(M,N,L)$ is an {\it isotopism}
between $\S_1$ and $\S_2$. They are {\em strongly isotopic} if we
can choose $M=N$. From any presemifield, one can naturally construct
a semifield which is isotopic to it (see~\cite{Knuth1965}). The
sizes of the nuclei as well as the size of the center of a semifield
are invariant under isotopy. The isotopism relation between
semifields arises from the isomorphism relation between the
projective planes coordinatized by them ({\it semifield planes}).
For a recent overview on the theory of finite semifields see Chapter
\cite{LaPo201*} in the collected work \cite{deBeSt201*}.
\bigskip

\bigskip

\medskip

Commutative presemifields in odd characteristic can be equivalently
described by planar DO polynomials \cite{CoMa1997}. A {\em
Dembowski--Ostrom (DO) polynomial} $f\in\F_q[x]$ ($q=p^e$) is a
polynomial of the shape $f(x)=\sum_{i,j=0}^{e-1}a_{ij}x^{p^i+p^j}$,
whereas a polynomial $f\in\F_q[x]$ is {\em planar} or {\em perfect
nonlinear} (PN for short) if, for each $a\in\F_q^*$, the mapping
$x\mapsto f(x+a)-f(x)-f(a)$ is bijective. If $f(x)\in\F_q[x]$ is a
planar DO polynomial, then $\S_f=(\F_q,+,\star)$ is a commutative
presemifield where $x\star y=f(x+y)-f(x)-f(y)$. Conversely, if
$\S=(\F_q,+,\star)$ is a commutative presemifield of odd order, then
the polynomial $f(x)=\frac 12(x\star x)$ is a planar DO polynomial
and $\S=\S_f$.

Two functions $F$ and $F'$ from $\F_{p^n}$ to itself are called {\it
Carlet--Charpin--Zinoviev equivalent} ({\it CCZ--equivalent}) if for
some affine permutation $\cal L$ of $\F_{p^n}^2$ the image of the
graph of $F$ is the graph of $F'$, that is, ${\cal L}(G_F) = G_{F'}$
where $G_F = \{(x, F(x)) | x\in\F_{p^n}\}$ and $G_{F'} = \{(x,
F'(x)) | x\in\F_{p^n}\}$ (see \cite{CaChZi1998}). By \cite[Sec.
4]{BuHe2008}, two planar DO polynomials are CCZ--equivalent if and
only if the corresponding presemifields are strongly isotopic. In
\cite{CoHe2008}, it has been proven that two presemifields of order
$p^n$, with $p$ prime and $n$ odd integer, are strongly isotopic if
and only if they are isotopic. Whereas, for $n=6$ and $p=3$, Zhou in
\cite{ZhouSub}, by using MAGMA computations,  has shown that the
presemifields constructed in \cite{LuMaPoTr2011} and \cite{BuHe2008}
are isotopic but not strongly isotopic. In \cite{BierbrauerSub}, the author proved that the
two families of commutative presemifields constructed in \cite{BuHe2008}  are
contained, up to isotopy, into a unique family of presemifields, and
we refer to it as the family ${\cBH}$. Also in \cite{BierbrauerSub},
the author generalized the commutative semifields constructed in \cite{LuMaPoTr2011} ($\cP$ semifields) proving that
each $\cP$ semifield is not isotopic to any previously known
semifield with the possible exception of $\cBH$ presemifields.

\medskip

In  this paper we study the isotopy and strong isotopy relations
involving the above commutative presemifields, proving that the
${\cal LMPTB}$ semifields  are contained, up to isotopy, in the
family of ${\cal BHB}$ presemifields. Precisely, we show  that an
$\cP$ semifield of order $q^{2\ell}$ ($q$ odd and $\ell>1$ odd) is
isotopic to a $\cBH$ presemifield, and that they are strongly
isotopic if and if $q\equiv 1(mod\,4)$. This yields that, for planar
DO functions from $\F_{q^{2\ell}}$ to itself, when $q\equiv
-1(mod\,4)$ and $\ell>1$ odd, the isotopy relation is strictly more
general than CCZ--equivalence.

\section{Preliminary results}

If $\S=(\S,+,\bullet)$ is a presemifield, then
$\S^*=(\S,+,\bullet^*)$, where $x\bullet^* y=y\bullet x$ is a
presemifield as well, and it is called the {\em dual} of $\S$. If
$\S$ be a presemifield  of order $p^n$, then we may assume that
$\S=(\F_{p^n},+,\bullet)$, where $x\bullet
y=F(x,y)=\sum_{i,j=0}^{n-1}a_{ij}x^{p^i}y^{p^j}$, with
$a_{ij}\in\F_{p^n}$. The set $$S=\{\varphi_y:\ x\in\F_{p^n}\mapsto
F(x,y)\in\F_{p^n}\ |\ y\in\F_{p^n}\}\subseteq
\V=End(\F_{p^n},\F_p)$$ is the spread set associated with $\S$ and
$$S^*=\{\varphi^x:\ y\in\F_{p^n}\mapsto F(x,y)\in\F_{p^n}\ |\ x\in\F_{p^n}\}\subseteq \V=End(\F_{p^n},\F_p)$$ is the spread set
associated with $\S^*$. Both $S$ and $S^*$ are subgroups of order
$p^n$ of the additive group of $\V$ and each nonzero element of
$S$ and $S^*$ is invertible.

\bigskip

For each $x\in\F_{p^n}$, the {\it conjugate} $\bar\varphi$ of the
element $\varphi(x)=\sum_{i=0}^{n-1}\beta_ix^{p^i}$ of $\V$ is
defined by
$\bar\varphi(x)=\sum_{i=0}^{n-1}\beta_i^{p^{n-i}}x^{p^{n-i}}.$ The
map
%\begin{equation}\label{mapT}
$$
T:\varphi\in\V\mapsto\bar\varphi\in V
$$
%\end{equation}
is an $\F_p$--linear permutation of $\V$. Straightforward
computations show that
\begin{equation}\label{form:mapT}
\overline{\varphi\circ\psi}=\overline{\psi}\circ \overline{\varphi}\quad\quad
\overline{\varphi^{-1}}=(\overline{\varphi})^{-1}.
\end{equation}
The algebraic structure $\S^t=(\F_{p^n},+,\bullet^t)$, where
$x\bullet^t y=\overline{\varphi_y}(x)$, is a presemifield and it is
called the {\em transpose} of $\S$ (see e.g. \cite[Lemma
2]{LuMaPoTr2011}). The set $S^t=\{\overline{\varphi_y}|\
y\in\F_{p^n}\}$ is the spread set associated with $\S^t$.

\bigskip

In what follows we want to point out the relationship between
spread sets associated with two isotopic presemifields.

\begin{prop}\label{prop:IsotopicSemifAndSpreadSets}
Let $\S_1=(\F_{p^n},+,\bullet)$ and  $\S_2=(\F_{p^n},+,\star)$ be
two presemifields and let $S_1=\{\varphi_y:x\mapsto x\bullet y |\
y\in\F_{p^n}\}$ and $S_2=\{\varphi'_y:x\mapsto x\star y |\
y\in\F_{p^n}\}$ be the corresponding spread sets. Then $\S_1$ and
$\S_2$ are isotopic under the isotopism $(M,N,L)$ if and only if
$S_2=LS_1M^{-1}=\{L\circ \varphi_{y}\circ M^{-1}|\ y\in\F_{p^n}\}$.
\end{prop}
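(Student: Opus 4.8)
The plan is to translate the isotopism identity into an identity between the spread-set maps and then simply read off the claimed equality of spread sets. Write $\varphi_y(x)=x\bullet y$ and $\varphi'_z(x)=x\star z$, so that $S_1=\{\varphi_y\mid y\in\F_{p^n}\}$ and $S_2=\{\varphi'_z\mid z\in\F_{p^n}\}$. The defining relation $M(x)\star N(y)=L(x\bullet y)$ rewrites, as an identity of functions of $x$, as
\[
\varphi'_{N(y)}\circ M=L\circ\varphi_y,
\]
or equivalently $\varphi'_{N(y)}=L\circ\varphi_y\circ M^{-1}$, for every $y\in\F_{p^n}$. This single reformulation is the engine of both implications.

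For the \emph{only if} direction, suppose $(M,N,L)$ is an isotopism. The displayed identity then holds for all $y$; since $N$ is a permutation of $\F_{p^n}$, the element $N(y)$ runs over all of $\F_{p^n}$ as $y$ does, whence
\[
S_2=\{\varphi'_{N(y)}\mid y\in\F_{p^n}\}=\{L\circ\varphi_y\circ M^{-1}\mid y\in\F_{p^n}\}=LS_1M^{-1}.
\]

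For the \emph{if} direction, assume $S_2=LS_1M^{-1}$ for some $\F_p$-linear permutations $M,L$. Then for each $y$ the map $L\circ\varphi_y\circ M^{-1}$ lies in $S_2$, so there is $z$ with $\varphi'_z=L\circ\varphi_y\circ M^{-1}$; define $N(y):=z$. To see that $N$ is a well-defined $\F_p$-linear permutation I would use that the parametrisations $y\mapsto\varphi_y$ and $z\mapsto\varphi'_z$ are $\F_p$-linear bijections onto $S_1$ and $S_2$: linearity is immediate from the (two-sided) distributivity of $\bullet$ and $\star$, together with $\lambda w=w+\cdots+w$ for $\lambda\in\F_p$; injectivity follows because a nonzero $w$ yields an invertible, hence nonzero, map $\varphi_w$ (the nonzero elements of a spread set are invertible); and surjectivity is then forced by the common cardinality $p^n$ recorded in the preliminaries. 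Consequently $N$ is the composite of $\F_p$-linear bijections
\[
y\longmapsto\varphi_y\longmapsto L\circ\varphi_y\circ M^{-1}=\varphi'_{N(y)}\longmapsto N(y),
\]
hence an $\F_p$-linear permutation of $\F_{p^n}$. Unwinding the definition of $N$ gives $\varphi'_{N(y)}\circ M=L\circ\varphi_y$, i.e. $M(x)\star N(y)=L(x\bullet y)$ for all $x,y$, so that $(M,N,L)$ is an isotopism.

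The only genuine point requiring care --- and the step I expect to be the main obstacle --- is this last verification that $N$ is a bona fide $\F_p$-linear permutation rather than merely a set map; everything there rests on the bijectivity of $z\mapsto\varphi'_z$, which is precisely where the invertibility of nonzero spread-set elements and the order count $|S_2|=p^n$ from the preliminaries enter.
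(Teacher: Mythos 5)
Your proof is correct and takes essentially the same route as the paper: both directions rest on rewriting the isotopism identity as $\varphi'_{N(y)}=L\circ\varphi_y\circ M^{-1}$, and in the converse you define $N$ by exactly the equation the paper uses. The only difference is that you spell out the verification that $N$ is an $\F_p$--linear permutation (via linearity and bijectivity of $y\mapsto\varphi_y$ and $z\mapsto\varphi'_z$, using invertibility of nonzero spread-set elements and $|S_i|=p^n$), a step the paper leaves as ``it is easy to see''.
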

\begin{proof}
The necessary condition can be easily proven. Indeed if $(M,N,L)$
is an isotopism between $\S_1$ and $\S_2$, then
$L(\varphi_y(x))=\varphi'_{N(y)}(M(x))$ for each $x,y\in\F_{p^n}$.
Hence, $\varphi'_{N(y)}=L\circ \varphi_y\circ M^{-1}$ for each
$y\in\F_{p^n}$ and the statement follows taking
into account that $S_2=\{\varphi'_{N(y)}|\ y\in\F_{p^n}\}$.

Conversely, let $S_2=\{L\circ \varphi_{y}\circ M^{-1}|\
y\in\F_{p^n}\}$, where $M$ and $L$ are two $\F_p$--linear
permutations of $\F_{p^n}$. It is easy to see that the map $N$,
sending each element $y\in\F_{p^n}$ to the unique element
$z\in\F_{p^n}$ such that $\varphi'_{z}=L\circ \varphi_y\circ M^{-1}$
(where $\varphi_z'\in S_2$), is an $\F_p$--linear permutations of
$\F_{p^n}$. Hence, for each $x,y\in\F_{p^n}$ we get
$\varphi'_{N(y)}(x)=L(\varphi_{y}(M^{-1}(x)))$, i.e. $x\star
N(y)=L(M^{-1}(x)\bullet y)$ and putting $x'=M^{-1}(x)$ we have the
assertion.
\end{proof}

\bigskip

Let $\S=(\F_{p^n},+,\star)$ be a presemifield, where $x\star
y=F(x,y)=\sum_{i,j=0}^{n-1}a_{ij}x^{p^i}y^{p^j}$, with
$a_{ij}\in\F_{p^n}$, and let $S$ and $S^*$ be the spread sets
associated with $\S$ and $\S^*$, respectively.

The middle (respectively, right) nucleus of each semifield isotopic
to $\S$ is isomorphic to the largest field $\cN_m(\S)$
(respectively, $\cN_r(\S)$) contained in $\V=End(\F_{p^n},\F_p)$
such that $S\cN_m(\S)\subseteq S$ \footnote{By juxtaposition we will
always denote the composition of maps that will be read from right
to left.} (respectively, $\cN_r(S)S\subseteq S$), whereas the left
nucleus of each semifield isotopic to $\S$ is isomorphic to the
largest field $\cN_l(\S)$ contained in $\V$ such that
$\cN_l(\S)S^*\subseteq S^*$ (see \cite[Thm. 2.1]{MaPoTr201*} and
\cite{MaPoPrep}).

\medskip

Also, if $\F_q$ is a subfield of $\F_{p^n}$ and $F(x,y)$ is a
$q$--polynomial with respect to the variable $x$, i.e. $S\subset End
(\F_{p^n},\F_q)$, then $F_q=\{t_\lambda:\
x\in\F_{p^n}\mapsto\lambda x\in\F_{p^n}|\lambda\in\F_q\}\subset
\cN_l(\S)$ (\cite{MaPoPrep}).

If $(M,N,L)$ is an isotopism between two presemifields $\S_1$ and
$\S_2$, we have that $\cN_r(S_2)=L\cN_r(S_1)L^{-1}$,
$\cN_m(S_2)=M\cN_m(S_1)M^{-1}$ and $\cN_l(S_2)=L\cN_l(S_1)L^{-1}$
(see e.g. \cite{JoMaPoTr2008} and \cite{MaPoPrep}).

\medskip

From these results we can prove

\begin{theorem}\label{thm:semilinearity}
If $(M,N,L)$ is an isotopism between two presemifields $\S_1$ and
$\S_2$ of order $p^n$, whose associated spread sets $S_1$ and
$S_2$ are contained in $End(\F_{p^n},\F_q)$ ($\F_q$ a subfield of
$\F_{p^n}$), then $L$ and $M$ are $\F_q$--semilinear maps of
$\F_{p^n}$ with the same companion automorphism.
\end{theorem}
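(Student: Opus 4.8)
The plan is to prove the statement for $L$ first --- showing that $L$ carries the scalar field $F_q=\{t_\lambda:\lambda\in\F_q\}$ onto itself, so that $L$ becomes $\F_q$--semilinear --- and then to transfer this property to $M$ through the spread-set identity of Proposition \ref{prop:IsotopicSemifAndSpreadSets}.

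First I would assemble the inputs provided above. By Proposition \ref{prop:IsotopicSemifAndSpreadSets} the isotopism $(M,N,L)$ gives $S_2=LS_1M^{-1}$, i.e. $\varphi'_{N(y)}=L\circ\varphi_y\circ M^{-1}$ for every $y\in\F_{p^n}$. Since $S_1,S_2\subseteq End(\F_{p^n},\F_q)$, the remark on the left nucleus yields $F_q\subseteq\cN_l(\S_1)$ and $F_q\subseteq\cN_l(\S_2)$, while the transformation rule for nuclei gives $\cN_l(S_2)=L\,\cN_l(S_1)\,L^{-1}$.

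The decisive step is then the following. For each $\lambda\in\F_q$ we have $t_\lambda\in F_q\subseteq\cN_l(\S_1)$, hence $L\,t_\lambda\,L^{-1}\in L\,\cN_l(S_1)\,L^{-1}=\cN_l(\S_2)$; thus $LF_qL^{-1}\subseteq\cN_l(\S_2)$. Now $LF_qL^{-1}$ is the image of the field $F_q$ under the conjugation $\gamma\mapsto L\gamma L^{-1}$, so it is a subfield of $\cN_l(\S_2)$ of order $q$, and by the preceding remark $F_q$ is likewise a subfield of $\cN_l(\S_2)$ of order $q$. Because a finite field possesses a unique subfield of each order, these two coincide: $LF_qL^{-1}=F_q$. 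Writing $L\,t_\lambda\,L^{-1}=t_{\sigma(\lambda)}$, the induced map $\sigma\colon\F_q\to\F_q$ is additive and multiplicative (since $t_\lambda\mapsto L t_\lambda L^{-1}$ respects sums and compositions) and bijective, hence $\sigma\in\mathrm{Aut}(\F_q)$; equivalently $L(\lambda x)=\sigma(\lambda)L(x)$, so $L$ is $\F_q$--semilinear with companion automorphism $\sigma$.

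Finally, fixing any $y_0\neq0$ and rearranging $\varphi'_{N(y_0)}=L\circ\varphi_{y_0}\circ M^{-1}$ gives $M=(\varphi'_{N(y_0)})^{-1}\circ L\circ\varphi_{y_0}$. Both $\varphi_{y_0}\in S_1$ and $\varphi'_{N(y_0)}\in S_2$ are invertible and $\F_q$--linear, so composing them with the $\F_q$--semilinear map $L$ shows immediately that $M$ is $\F_q$--semilinear with the \emph{same} companion automorphism $\sigma$, which is exactly the assertion. I expect the only genuine obstacle to lie in the decisive step: a priori $F_q$ might be a proper subfield of $\cN_l(\S_1)$, so the rule $\cN_l(S_2)=L\cN_l(S_1)L^{-1}$ does not on its own force $LF_qL^{-1}=F_q$. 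What unlocks the argument is that $F_q$ and its conjugate sit inside the \emph{same} field $\cN_l(\S_2)$ and have equal order, so uniqueness of subfields of a finite field identifies them; once $L$ is settled, the passage to $M$ is purely formal.
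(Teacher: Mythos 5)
Your proposal is correct and takes essentially the same route as the paper's proof: both deduce $F_q\subseteq\cN_l(\S_1)\cap\cN_l(\S_2)$ from $S_1,S_2\subseteq End(\F_{p^n},\F_q)$, use the conjugation rule for the left nucleus together with the uniqueness of the subfield of order $q$ to get $LF_qL^{-1}=F_q$ and hence the $\F_q$--semilinearity of $L$, and then transfer the property to $M$ via the spread-set identity $S_2=LS_1M^{-1}$. Your only addition is to make the final step explicit by writing $M=(\varphi'_{N(y_0)})^{-1}\circ L\circ\varphi_{y_0}$ for a fixed $y_0\neq 0$, which the paper leaves implicit.
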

\begin{proof}
Since $S_1,\, S_2\subset End(\F_{p^n},\F_q)$, by the previous
arguments we have that
$$F_q=\{t_\lambda:\ x\in\F_{p^n}\mapsto\lambda
x\in\F_{p^n}|\lambda\in\F_q\}\subset \cN_l(\S_1)\cap\cN_l(\S_2).$$
Also $\cN_l(S_2)=L\cN_l(S_1)L^{-1}$. Then $L^{-1}F_qL\subset
\cN_l(\S_2)$, and since a field contains a unique subfield of given
order, it follows $L^{-1}F_qL=F_q$. Since the map
$t_\lambda\mapsto L^{-1}t_\lambda L$ is an automorphism
of the field of maps $F_q$, there exists $i\in\{0,\dots,n-1\}$ such
that $L^{-1}t_\lambda L=t_{\lambda^{p^i}}$ for each
$\lambda\in\F_q$, i.e. $L$ is an $\F_q$--semilinear map of
$\F_{p^n}$ with companion automorphism $\sigma(x)=x^{p^i}$. Also, by
Proposition \ref{prop:IsotopicSemifAndSpreadSets}, $LS_1M^{-1}=S_2$,
and hence $M$ is an $\F_q$--semilinear map of $\F_{p^n}$ as well,
with the same companion automorphism $\sigma$.
\end{proof}

\bigskip

Finally, since the dual and the transpose operations are invariant
under isotopy \cite{Knuth1965}, it makes sense to ask which is the
isotopism involving the duals and the transposes of two isotopic
presemifields. We have the following result.

\begin{prop}\label{prop:DualTranspIsot}
Let $\S_1$ and $\S_2$ be two presemifields. Then
\begin{itemize}
\item[$i)$] $(M,N,L)$ is an isotopism between $\S_1$ and $\S_2$ if
and only if $(N,M,L)$ is an isotopism between the dual
presemifields $\S_1^*$ and $\S_2^*$; \item[$ii)$]  $(M,N,L)$ is an
isotopism between $\S_1$ and $\S_2$ if and only if
$(\overline{L}^{-1},N,\overline{M^{-1}})$ is an isotopism between
the transpose presemifields $\S_1^t$ and $\S_2^t$; \item[$iii)$]
$(M,N,L)$ is an isotopism between $\S_1$ and $\S_2$ if and only
$(N,\overline{L}^{\,-1},\overline{M}^{\,-1})$ is an isotopism
between  $\S_1^{t*}$ and $\S_2^{t*}$.
\end{itemize}
\end{prop}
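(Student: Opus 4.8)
The plan is to handle the three parts in the order $i)$, $ii)$, $iii)$, deriving $iii)$ by composing the first two. For part $i)$ I would argue directly from the defining identity. By definition $(M,N,L)$ is an isotopism between $\S_1=(\F_{p^n},+,\bullet)$ and $\S_2=(\F_{p^n},+,\star)$ exactly when $M(x)\star N(y)=L(x\bullet y)$ holds for all $x,y\in\F_{p^n}$. Since the dual multiplications satisfy $x\bullet^* y=y\bullet x$ and $x\star^* y=y\star x$, the requirement that $(N,M,L)$ be an isotopism between $\S_1^*$ and $\S_2^*$ reads $N(x)\star^* M(y)=L(x\bullet^* y)$, i.e. $M(y)\star N(x)=L(y\bullet x)$. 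This is precisely the original identity with the roles of $x$ and $y$ interchanged, so the two conditions are equivalent; there is no real obstacle here.

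For part $ii)$ I would pass to the language of spread sets and exploit the conjugation map $T$. By Proposition \ref{prop:IsotopicSemifAndSpreadSets}, $(M,N,L)$ is an isotopism if and only if $S_2=LS_1M^{-1}$, where $S_i$ denotes the spread set of $\S_i$ and $\varphi'_{N(y)}=L\circ\varphi_y\circ M^{-1}$. As the spread set of the transpose is $S_i^t=\{\overline{\varphi}\mid\varphi\in S_i\}$, I would apply $T$ to every element of $S_2$ and invoke the rule $\overline{\varphi\circ\psi}=\overline{\psi}\circ\overline{\varphi}$ from \eqref{form:mapT}:
\[
\overline{\varphi'_{N(y)}}=\overline{L\circ\varphi_y\circ M^{-1}}=\overline{M^{-1}}\circ\overline{\varphi_y}\circ\overline{L}.
\]
This shows $S_2^t=\overline{M^{-1}}\,S_1^t\,\overline{L}$, which by Proposition \ref{prop:IsotopicSemifAndSpreadSets} (applied with $L'=\overline{M^{-1}}$ and $(M')^{-1}=\overline{L}$, i.e. $M'=\overline{L}^{-1}$) says exactly that $(\overline{L}^{-1},N',\overline{M^{-1}})$ is an isotopism between $\S_1^t$ and $\S_2^t$ for some middle permutation $N'$.

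The step needing care — and the main, if mild, obstacle — is confirming that this middle map is again $N$, rather than a fresh permutation produced abstractly by Proposition \ref{prop:IsotopicSemifAndSpreadSets}. I would settle this by tracking the indexing of the spread sets through $T$: the element of $S_2^t$ labelled by $z$ in the transpose presemifield is $\overline{\varphi'_z}$, and the displayed identity gives $\overline{\varphi'_{N(y)}}=L'\circ\overline{\varphi_y}\circ(M')^{-1}$; hence the unique index $z$ with $\overline{\varphi'_z}=L'\circ\overline{\varphi_y}\circ(M')^{-1}$ is precisely $z=N(y)$, so $N'=N$. Rewriting $\overline{M^{-1}}=(\overline{M})^{-1}$ via \eqref{form:mapT} puts the triple in the stated form, and both implications follow at once because $T$ is a permutation and the spread-set criterion is an equivalence.

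For part $iii)$ I would simply compose the previous two results. Since $\S_i^{t*}=(\S_i^t)^*$, applying part $i)$ to the transpose presemifields $\S_1^t,\S_2^t$ and to the isotopism $(\overline{L}^{-1},N,\overline{M^{-1}})$ obtained in part $ii)$ swaps its first two components, yielding that $(N,\overline{L}^{-1},\overline{M^{-1}})$ is an isotopism between $\S_1^{t*}$ and $\S_2^{t*}$. Using $\overline{M^{-1}}=\overline{M}^{-1}$ recasts this as the stated triple $(N,\overline{L}^{-1},\overline{M}^{-1})$, and the equivalence is inherited directly from those established in $i)$ and $ii)$.
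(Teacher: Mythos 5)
Your proof is correct and takes essentially the same route as the paper: part $i)$ directly from the definition of the dual, part $ii)$ by conjugating the per-$y$ identity $L\circ\varphi_y=\varphi'_{N(y)}\circ M$ via \eqref{form:mapT}, and part $iii)$ by composing $i)$ and $ii)$. The only cosmetic difference is that you phrase $ii)$ through the spread-set criterion of Proposition \ref{prop:IsotopicSemifAndSpreadSets} and then track indices to confirm the middle permutation is again $N$ (a worthwhile check in your formulation), whereas the paper keeps $N(y)$ explicit in the functional identity throughout and so never needs that verification.
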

\begin{proof} Statement $i)$ easily follows from the definition of the dual
operation, whereas $iii)$ follows from $i)$ and $ii)$.

Let us prove $ii)$. Let $\S_1=(\F_{p^n},+,\bullet)$ and
$\S_2=(\F_{p^n},+,\star)$ and let $S_1=\{\varphi_y|\ y\in\F_{p^n}\}$
and $S_2=\{\varphi'_y|\ y\in\F_{p^n}\}$ be the corresponding spread
sets. By the previous arguments the corresponding transpose
presemifields are $\S_1^t=(\F_{p^n},+,\bullet^t)$ and
$\S_2^t=(\F_{p^n},+,\star^t)$, where $x\bullet^t
y=\overline{\varphi_y}(x)$ and $x\star^t
y=\overline{\varphi'_y}(x)$, respectively. The triple $(M,N,L)$ is
an isotopism between $\S_1$ and $\S_2$ if and only if
$L\circ\varphi_y=\varphi'_{N(y)}\circ M$ for each $y\in\F_{p^n}$. By
(\ref{form:mapT}), $\overline{\varphi_y}\circ
\overline{L}=\overline{M}\circ \overline{\varphi'_{N(y)}}$ for each
$y\in\F_{p^n}$ and hence $$\overline{L}(x)\bullet^t
y=\overline{M}(x\star^tN(y))$$ for each $x,y\in\F_{p^n}$. By
(\ref{form:mapT}), this is equivalent to
$\overline{M^{-1}}(z\bullet^t y)=\overline{L^{-1}}(z)\star^tN(y)$
for each $z,y\in\F_{p^n}$. The assertion follows.
\end{proof}

Finally, by $iii)$ of Proposition \ref{prop:DualTranspIsot} and by
Proposition \ref{prop:IsotopicSemifAndSpreadSets} we immediately
get the following result.

\begin{cor}\label{cor:IsotopyComSimpl-spreads}
Let $\S_1=(\F_{p^n},+,\bullet)$ and $\S_2=(\F_{p^n},+,\star)$ be two
presemifields and let $S_1^{t*}$ and $S_2^{t*}$ be the spread sets
associated with the presemifields $\S_1^{t*}$ and $\S_2^{t*}$,
respectively. Then $\S_1$ and $\S_2$ are strongly isotopic if and
only if there exists an $\F_p$--linear permutation $H$ of $\F_{p^n}$
such that $S_2^{t*}=HS_1^{t*}\overline{H}$.
\end{cor}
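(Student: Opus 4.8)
The plan is to reduce the strong-isotopy condition to a spread-set identity by composing the two preceding results: part $iii)$ of Proposition~\ref{prop:DualTranspIsot}, which transports an isotopism to the $t*$-dual presemifields, and Proposition~\ref{prop:IsotopicSemifAndSpreadSets}, which converts an isotopism into a relation between the associated spread sets. The whole argument is a chain of equivalences, so the only care needed is in tracking the conjugation map $T$ and the inverses.

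First I would record that, by definition, $\S_1$ and $\S_2$ are strongly isotopic if and only if there exists an isotopism of the special form $(M,M,L)$ between them. Substituting $N=M$ into part $iii)$ of Proposition~\ref{prop:DualTranspIsot}, this holds if and only if $(M,\overline{L}^{-1},\overline{M}^{-1})$ is an isotopism between $\S_1^{t*}$ and $\S_2^{t*}$. The crucial feature is that in the transformed triple the first component is $M$ and the third is $\overline{M}^{-1}$, so both outer maps are now governed by a single map and its conjugate.

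Next I would apply Proposition~\ref{prop:IsotopicSemifAndSpreadSets} to this triple: it is an isotopism between $\S_1^{t*}$ and $\S_2^{t*}$ exactly when $S_2^{t*}=\overline{M}^{-1}S_1^{t*}M^{-1}$. Setting $H=\overline{M}^{-1}$, the statement will follow once I show $\overline{H}=M^{-1}$. This is the one point carrying any real content: I would verify that $T$ is an involution, $\overline{\overline{M}}=M$, by a direct coefficientwise computation on $p$-polynomials (using $\beta^{p^n}=\beta$ for $\beta\in\F_{p^n}$), and then combine it with $\overline{\varphi^{-1}}=(\overline{\varphi})^{-1}$ from (\ref{form:mapT}) to obtain $\overline{H}=\overline{\overline{M}^{-1}}=(\overline{\overline{M}})^{-1}=M^{-1}$. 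Hence $S_2^{t*}=HS_1^{t*}\overline{H}$.

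For the converse I would run the same chain backwards: given an $\F_p$-linear permutation $H$ with $S_2^{t*}=HS_1^{t*}\overline{H}$, set $M=\overline{H}^{-1}$, so that the involution property yields $\overline{M}^{-1}=H$ and $M^{-1}=\overline{H}$, and therefore $S_2^{t*}=\overline{M}^{-1}S_1^{t*}M^{-1}$. Proposition~\ref{prop:IsotopicSemifAndSpreadSets} then produces an isotopism $(M,N',\overline{M}^{-1})$ between $\S_1^{t*}$ and $\S_2^{t*}$ for a uniquely determined $N'$, and reading part $iii)$ of Proposition~\ref{prop:DualTranspIsot} in reverse returns an isotopism between $\S_1$ and $\S_2$ whose first two components both equal $M$, i.e. a strong isotopism. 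The main obstacle is nothing deeper than disciplined bookkeeping of the bar operation and the inverses; it is precisely the identity $T^2=\mathrm{id}$ that makes the two outer factors $H$ and $\overline{H}$ match up cleanly.
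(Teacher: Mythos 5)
Your proof is correct and takes essentially the same route as the paper, which derives this corollary immediately by combining part $iii)$ of Proposition~\ref{prop:DualTranspIsot} (with $N=M$) and Proposition~\ref{prop:IsotopicSemifAndSpreadSets}, exactly as in your chain of equivalences. Your explicit verification that conjugation is an involution, $\overline{\overline{M}}=M$, together with $\overline{\varphi^{-1}}=(\overline{\varphi})^{-1}$ from (\ref{form:mapT}), is precisely the bookkeeping the paper leaves implicit, and it correctly identifies $H=\overline{M}^{-1}$ with $\overline{H}=M^{-1}$ in both directions.
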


\section{$\cBH$ and $\cP$ commutative presemifields}
The $\cBH$ presemifields and the $\cP$ semifields  presented in
\cite{BierbrauerSub} can be described as follows.

\bigskip

 $\cBH$)\quad\quad {\em $B(p,m,s,\beta)$ presemifields}
{\rm\cite{BuHe2008}, \cite{BierbrauerSub}}:\quad
$(\F_{p^{2m}},+,\star)$, $p$ odd prime and $m>1$, with
\begin{equation}\label{form:multiplSimplBHp}
x\star
y=xy^{p^m}+x^{p^m}y+[\beta(xy^{p^s}+x^{p^s}y)+\beta^{p^m}(xy^{p^s}+x^{p^s}y)^{p^m}]\omega,
\end{equation}
where $0<s<2m$, $\omega$ is an element of $\F_{p^{2m}}\setminus
\F_{p^m}$ with $\omega^{p^m}=-\omega$ and the following conditions
are satisfied:
\begin{equation}\label{form:condMultSimplBHp}
\beta\in\F_{p^{2m}}^*:\ \ \beta^{\frac{p^{2m}-1}{(p^m+1,p^s+1)}}\ne
1\quad\quad\mbox{and}\quad\quad \mbox{$\not\!\exists\
a\in\F_{p^{2m}}^*:$ $a+a^{p^m}=a+a^{p^s}=0$.}
\end{equation}

\bigskip

$\cP$)\quad{\em $P(q,\ell)$ semifields} {\cite{LuMaPoTr2011},
\cite{BierbrauerSub}}:\quad $(\F_{q^{2\ell}},+,*)$, $q$ odd prime
power and $\ell=2k+1>1$ odd, with
$$x*y=\frac{1}2(xy+x^{q^\ell}y^{q^\ell})+\frac 14G(xy^{q^2}+x^{q^2}y),
$$
where
$G(x)=\sum_{i=1}^{k}(-1)^{i}(x-x^{q^{\ell}})^{q^{2i}}+\sum_{j=1}^{k-1}(-1)^{k+j}(x-x^{q^{\ell}})^{q^{2j+1}}$.

\bigskip

In order to prove our results, we start by further investigating
Multiplication (\ref{form:multiplSimplBHp}) and Conditions
(\ref{form:condMultSimplBHp}). Set $h:=\gcd(m,s)$, then $m=h\ell$
and $s=hd$, where $\ell$ and $d$ are two positive integers such that
$0<d<2\ell$ and $\gcd(\ell,d)=1$. Putting $q=p^h$, then
$\omega\in\F_{q^{2\ell}}\setminus \F_{q^\ell}$ such that
$\omega^{q^\ell}=-\omega$ and the $\cBH$ presemifields
$B(p,m,s,\beta)=(\F_{q^{2\ell}},+,\star)$ will be denoted by
$\overline{B}(q,\ell,d,\beta)$. Moreover, Multiplication
(\ref{form:multiplSimplBHp}) and Conditions
(\ref{form:condMultSimplBHp}) can be rewritten as
%\begin{equation}\label{form:MultBH}
$$
x\star y=
xy^{q^\ell}+x^{q^\ell}y+[\beta(xy^{q^d}+x^{q^d}y)+\beta^{q^\ell}(xy^{q^d}+x^{q^d}y)^{q^\ell}]\omega,
$$
%\end{equation}
where
\begin{equation}\label{form-beta1}
\beta\in\F_{q^{2\ell}}^*:\ \
\beta^{\frac{q^{2\ell}-1}{(q^\ell+1,q^d+1)}}\ne 1,
\end{equation}
and
\begin{equation}\label{form-s1}
\not\!\exists\ a\in\F_{q^{2\ell}}^*:\ \ a+a^{q^\ell}=a+a^{q^d}=0.
\end{equation}
\bigskip

We get the following preliminary result.

\begin{lemma}\label{lemma:beta&s}

\begin{itemize}
\item[i)] Condition (\ref{form-s1}) is fulfilled if and only if
$\ell+d$ is odd. \item[ii)] If Condition (\ref{form-s1}) is
fulfilled, then an element $\beta\in\F_{q^{2\ell}}^*$ satisfies
Condition (\ref{form-beta1}) if and only if $\beta$ is a nonsquare
of $\F_{q^{2\ell}}$.
\end{itemize}
\end{lemma}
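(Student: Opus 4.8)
The plan is to read both conditions as statements about the cyclic multiplicative group and the Frobenius action on $\F_{q^{2\ell}}$, using $\gcd(\ell,d)=1$ throughout. For $i)$, suppose a nonzero $a$ satisfies $a+a^{q^\ell}=0$ and $a+a^{q^d}=0$, i.e. $a^{q^\ell}=-a$ and $a^{q^d}=-a$. Applying the second relation twice gives $a^{q^{2d}}=(-a)^{q^d}=a$, so $a\in\F_{q^{2d}}\cap\F_{q^{2\ell}}=\F_{q^{\gcd(2d,2\ell)}}=\F_{q^2}$, where the last equality is exactly where $\gcd(\ell,d)=1$ enters. On $\F_{q^2}$ the Frobenius $x\mapsto x^q$ has order $2$, so $a^{q^\ell}=a$ when $\ell$ is even and $a^{q^\ell}=a^q$ when $\ell$ is odd, and likewise for $d$.

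Since $p$ is odd, the equation $a^{q^\ell}=-a$ forces $a=0$ whenever $\ell$ is even (it would read $a=-a$), and similarly for $d$. Hence a nonzero common solution can exist only if both $\ell$ and $d$ are odd, in which case both relations collapse to the single condition $a^{q}=-a$ on $\F_{q^2}$, whose solution set is the $(-1)$--eigenspace of Frobenius and contains $q-1$ nonzero elements. Thus a nonzero $a$ as in $(\ref{form-s1})$ exists precisely when $\ell$ and $d$ are both odd, i.e. when $\ell+d$ is even; equivalently, Condition $(\ref{form-s1})$ holds if and only if $\ell+d$ is odd, proving $i)$.

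For $ii)$ I set $e:=\gcd(q^\ell+1,q^d+1)$ and $N:=q^{2\ell}-1$. Since $e\mid q^\ell+1\mid N$ and $\F_{q^{2\ell}}^*$ is cyclic of order $N$, the subgroup $\{\beta:\beta^{N/e}=1\}$ is the unique subgroup of order $N/e$, namely the set of $e$--th powers; therefore $(\ref{form-beta1})$ says exactly that $\beta$ is not an $e$--th power. The key computation is that $e=2$ under Condition $(\ref{form-s1})$: from $q^\ell\equiv q^d\equiv-1\pmod e$ one gets $q^{2\gcd(\ell,d)}=q^2\equiv1\pmod e$, so the order of $q$ modulo $e$ divides $2$. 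By $i)$ exactly one of $\ell,d$ is even; assuming $\ell$ even (the case $d$ even is symmetric), $q^\ell=(q^2)^{\ell/2}\equiv1\pmod e$, while $q^\ell\equiv-1\pmod e$, whence $e\mid2$. As $q$ is odd, both $q^\ell+1$ and $q^d+1$ are even, so $e=2$. Consequently $N/e=N/2$, the $e$--th powers are precisely the squares, and $\beta^{N/2}\ne1$ is equivalent to $\beta$ being a nonsquare, which is $ii)$.

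The only genuinely delicate point is the gcd evaluation $e=2$ in $ii)$; everything else is the cyclic--group/power--map dictionary and the elementary Frobenius parity analysis on $\F_{q^2}$. I expect that identifying $\{\beta:\beta^{N/e}=1\}$ with the $e$--th powers and then pinning down $e$ via the order of $q$ modulo $e$ will be the main obstacle, since it is the step that crucially converts the seemingly ad hoc exponent $\frac{q^{2\ell}-1}{(q^\ell+1,q^d+1)}$ into the clean square/nonsquare criterion.
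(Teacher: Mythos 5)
Your proof is correct, but it reaches both halves of the lemma by a genuinely different route than the paper. For $i)$, the paper treats the two directions asymmetrically: when $\ell,d$ are both odd it exhibits the solutions of $a^q=-a$ just as you do, but for the converse it combines the two relations into $a^{q^{\ell+d}}=a$ and counts the solutions of $x^{q^{\ell+d}-1}=1$ via $\gcd(q^{2\ell}-1,q^{\ell+d}-1)=q^{\gcd(2\ell,\ell+d)}-1=q-1$, forcing $a\in\F_q^*$ and hence a contradiction; you instead use a single relation to place $a$ in $\F_{q^{\gcd(2d,2\ell)}}=\F_{q^2}$ and then settle everything by the parity of the Frobenius on $\F_{q^2}$, which handles both directions uniformly. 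For $ii)$ the divergence is sharper: the paper computes $e=\gcd(q^\ell+1,q^d+1)$ in two stages --- a case analysis modulo $4$ (according as $q\equiv 1$ or $3 \pmod 4$) to show the $2$-part of $e$ is exactly $2$, and an exclusion of odd primes $p'\mid e$ via a B\'ezout relation $1=a\ell+bd$ with $a$ odd --- whereas you obtain $e\mid 2$ in one stroke: since $q^{2\ell}\equiv q^{2d}\equiv 1\pmod e$, the order of $q$ modulo $e$ divides $\gcd(2\ell,2d)=2$, so with (say) $\ell$ even, $q^\ell\equiv 1\pmod e$ clashes with $q^\ell\equiv -1\pmod e$; parity of $q$ then gives $e=2$. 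Your argument is shorter and avoids both the mod-$4$ cases and the B\'ezout parity trick, at the cost of nothing substantive; it also makes explicit a step the paper leaves implicit, namely that in the cyclic group $\F_{q^{2\ell}}^*$ the condition $\beta^{(q^{2\ell}-1)/e}\ne 1$ says exactly that $\beta$ is not an $e$-th power, which with $e=2$ is precisely the nonsquare criterion.
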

\begin{proof}
$i)$\quad The sufficient condition can be easily proven. Indeed,
since $\gcd(\ell,d)=1$ then $\ell$ and $d$ cannot be both even
integers. Moreover, if $\ell$ and $d$ were both odd, then each
element $a\in\F_{q^2}$ such that $a^q=-a$ would be a solution of
$x^{q^\ell}=x^{q^d}=-x$, contradicting our assumption. On the
other hand, suppose that $\ell+d$ is odd, then
$\gcd(2\ell,\ell+d)=\gcd(\ell,d)=1$. Hence, if there exists an
element $a\in\F_{q^{2\ell}}^*$ such that
$a^{q^\ell}+a=a^{q^d}+a=0$, then $a$ satisfies the equation
$x^{q^{\ell+d}-1}=1$, which admits
$\gcd(q^{2\ell}-1,q^{\ell+d}-1)=q^{\gcd(2\ell,\ell+d)}-1=q-1$
solutions. It follows that $a\in\F_q^*$, a contradiction.

\smallskip
$ii)$\quad We first suppose $\ell$ is odd and $d$ is even and
prove that $\gcd(q^\ell+1,q^d+1)=2$. If $q\equiv 1\,(mod\,4)$,
then $q^\ell+1\equiv q^d+1\equiv 2\,(mod\ 4)$. On the other hand,
if $q\equiv 3\,(mod\,4)$, since $\ell$ is odd and $d$ is even,
$q^\ell+1\equiv 0\,(mod\ 4)$ and $q^d+1\equiv 2\,(mod\ 4)$. So in
both cases $2$ is the maximum power of 2 dividing
$\gcd(q^\ell+1,q^d+1)$. Now suppose that $p'$ is an odd prime such
that $p'|(q^\ell+1)$ and $p'|(q^d+1)$. Hence $q^\ell\equiv
-1\,(mod\ p') $ and $q^d\equiv -1\,(mod\ p')$. Since
$\gcd(\ell,d)=1$, then $1=a\ell+bd$, with $a$ an odd integer. From
the previous congruences it follows that $q=q^{a\ell+bd}\equiv
(-1)^a(-1)^b\,(mod\ p')\equiv (-1)^{b+1}\,(mod\ p')$ and since $d$
is even, we have $q^d\equiv 1\,(mod\ p')$, a contradiction.

If $\ell$ is even and $d$ is odd, arguing as in the previous case
we obtain the assertion.
\end{proof}

\begin{remark}\label{rm:BHconditions}
{\rm By Lemma \ref{lemma:beta&s}, the algebraic structure $\overline{B}(q,\ell,d,\beta)$
is a presemifield if and only if {\bf $\ell+d$ is odd and $\beta$
is a nonsquare in $\F_{q^{2\ell}}$.}}
\end{remark}

In \cite{BierbrauerSub}, the author proved that the semifields
$P(q,\ell)$ are not isotopic to any previously known commutative semifield
with the possible exception of $\cBH$ presemifields. In what
follows, using the notation introduced in this section, we study
the isotopy relation involving the families of presemifields
$P(q,\ell)$ and $\overline{B}(q,\ell,d,\beta)$ and we prove that a $P(q,\ell)$ semifield of order $q^{2\ell}$, with $q=p^e$ an
odd prime power and $\ell>1$ an odd integer, is isotopic to a
$\overline{B}(q,\ell,2,\beta)$ presemifield for a suitable choice
of $\beta$.

\section{The isotopism issue}
By \cite{Kantor2003}, there is a canonical bijection between
commutative and symplectic presemifields. Precisely, if $\S$ is a
commutative presemifield, then $\S^{t*}$ is a symplectic
presemifield. Moreover, by $iii)$ of Proposition
\ref{prop:DualTranspIsot}, two commutative presemifields are
isotopic if and only if the corresponding symplectic presemifields
are isotopic as well. So, in the next, we will prove that the
symplectic presemifield $P(q,\ell)^{t*}$ is isotopic to a symplectic
presemifield $\overline{B}(q,\ell,2,\beta)^{t*}$.

\subsubsection*{The symplectic version of $P(q,\ell)$ semifields}

From \cite[Sec. 3]{BierbrauerSub}, the symplectic presemifield
arising from the commutative semifield $P(q,\ell)$, $q$ an odd
prime power and $\ell=2k+1$ an odd integer, is
$P(q,\ell)^{t*}=(\F_{q^{2\ell}},+,\bullet)$ with multiplication
given by

%\begin{equation}\label{form:multiplSimplBr}
$$
x\bullet y=\frac{y+y^{q^\ell}}2x+\frac 14
(y-y^{q^\ell}+\alpha_y+\beta_y+\gamma_y)x^{q^2}+\frac
14(y-y^{q^\ell}-\alpha_y-\beta_y-\gamma_y)x^{q^{2\ell-2}},
$$
%\end{equation}
where $\alpha_y=\sum_{i=1}^{\ell-1}(-1)^{i+1}y^{q^{2i}}$,
$\beta_y=\sum_{j=0}^{k-1}(-1)^{k+j+1}y^{q^{2j+1}}$ and
$\gamma_y=\sum_{t=k+1}^{\ell-1}(-1)^{k+t}y^{q^{2t+1}}$.

Setting $g(y):=\alpha_y+\beta_y+\gamma_y$ and
$$
f(y):=\frac 14(y-y^{q^\ell}+g(y)),$$ direct computations show that
\begin{equation}\label{form:f}f(y)^{q^{2\ell-2}}=\frac
14(y-y^{q^\ell}-g(y)).\end{equation} Indeed, reducing modulo
$y^{q^{2\ell}}-y$, we have
$$4\,f(y)^{q^{2\ell-2}}=y^{q^{2\ell-2}}-y^{q^{\ell-2}}+\sum_{i=1}^{\ell-1}(-1)^{i+1}y^{q^{2(i-1)}}+
\sum_{j=0}^{k-1}(-1)^{k+j+1}y^{q^{2j-1}}+\sum_{t=k+1}^{\ell-1}(-1)^{k+t}y^{q^{2t-1}}$$
and setting $i'=i-1$, $j'=j-1$, $t'=t-1$, we get
\begin{eqnarray*}
4\,f(y)^{q^{2\ell-2}}&=&y-y^{q^\ell}+\sum_{i'=1}^{\ell-1}(-1)^{i'}y^{q^{2i'}}+
\sum_{j'=0}^{k-1}(-1)^{k+j'+1}y^{q^{2j'+1}}+\sum_{t'=k+1}^{\ell-1}(-1)^{k+t'}y^{q^{2t'+1}}\\
&=&y-y^{q^\ell}-(\alpha_y+\beta_y+\gamma_y).
\end{eqnarray*}
Hence
\begin{equation}\label{form:multiplSimplBr1}
x\bullet y=\frac{y+y^{q^\ell}}2x+f(y)x^{q^2}+f(y)^{q^{2\ell-2}}x^{q^{2\ell-2}}.
\end{equation}

Let $\eta\in\F_{q^2}\setminus\F_q$ such that
$\eta^q=-\eta$. Since $q$ and $\ell=2k+1$ are odd integers, the map
$\phi:\gamma\in\F_{q^\ell}\mapsto
\gamma+\gamma^{q^2}\in\F_{q^\ell}$ is invertible and
$$\phi^{-1}:z\in\F_{q^\ell}\mapsto \frac 12\Bigg(\sum_{i=0}^k(-1)^iz^{q^{2i}}+\sum_{j=0}^{k-1}(-1)^{k+j+1}z^{q^{2j+1}}\Bigg)\in\F_{q^\ell}.$$
Taking into account that $\{1,\eta\}$ is an $\F_{q^\ell}$--basis
of $\F_{q^{2\ell}}$ and that $\phi$ is an invertible map, it
follows that any element $y\in\F_{q^{2\ell}}$ can be uniquely written
as
$$y=A+(B^{q^2}+B)\eta,$$ with $A,B\in\F_{q^\ell}$.
Also
\begin{equation}\label{form:alpha}
A=\frac{y+y^{q^\ell}}{2}
\end{equation}
and
%\begin{equation}\label{form:beta}
$$B^{q^2}+B=\frac{y-y^{q^\ell}}{2\eta}$$
%\end{equation}
Direct computations show that
\begin{eqnarray}\label{form-beta2}
B &=&\phi^{-1}\Bigg(\frac{y-y^{q\ell}}{2\eta}\Bigg)=
\frac 12\Bigg(\sum_{i=0}^k(-1)^i\frac{(y-y^{q^\ell})^{q^{2i}}}{2\eta}+\sum_{j=0}^{k-1}(-1)^{k+j+1}\frac{(y-y^{q^\ell})^{q^{2j+1}}}{-2\eta}\Bigg)\\
&=& \frac
1{4\eta}\Bigg(y-y^{q^\ell}+\sum_{i=1}^k(-1)^iy^{q^{2i}}-\sum_{i=1}^k(-1)^iy^{q^{\ell+2i}}-
\sum_{j=0}^{k-1}(-1)^{k+j+1}y^{q^{2j+1}}+\sum_{j=0}^{k-1}(-1)^{k+j+1}y^{q^{\ell+2j+1}}\Bigg).\nonumber
\end{eqnarray}
Putting $2t+1:=\ell+2i$, i.e. $i=t-k$, we have
%\begin{equation}\label{form3}
$$
\sum_{i=1}^k(-1)^iy^{q^{\ell+2i}}=\sum_{t=k+1}^{\ell-1}(-1)^{t-k}y^{q^{2t+1}}=\sum_{t=k+1}^{\ell-1}(-1)^{t+k}y^{q^{2t+1}}
$$
%\end{equation}
and putting $2v:=\ell+2j+1$, i.e. $j=v-k-1$, we have
%\begin{equation}\label{form3}
$$
\sum_{j=0}^{k-1}(-1)^{k+j+1}y^{q^{\ell+2j+1}}=\sum_{v=k+1}^{\ell-1}(-1)^vy^{q^{2v}}.
$$
%\end{equation}
Hence, substituting the last two equalities in Equation (\ref{form-beta2}), we get
$$B =\frac1{4\eta}\Bigg(y-y^{q^\ell}+\sum_{i=1}^{\ell-1}(-1)^iy^{q^{2i}}-
\sum_{j=0}^{k-1}(-1)^{k+j+1}y^{q^{2j+1}}-
\sum_{t=k+1}^{\ell-1}(-1)^{t+k}y^{q^{2t+1}}\Bigg)=\frac1{4\eta}(y-y^{q^\ell}-\alpha_y-\beta_y-\gamma_y)$$
and, taking (\ref{form:f}) into account, this yields
$f(y)=B^{q^2}\eta$. Hence, from (\ref{form:multiplSimplBr1}),
(\ref{form:alpha}) and the last equality, we get the following
result.
\begin{prop}\label{prop:MultSimplBR1}
The symplectic presemifield
$P(q,\ell)^{t*}=(\F_{q^{2\ell}},+,\bullet)$ arising from the
commutative semifield $P(q,\ell)$ has multiplication
%\begin{equation}\label{form:multiplSimplPql}
$$x\bullet y=A x+B^{q^2}\eta x^{q^2}+B\eta x^{q^{2\ell-2}},$$
%\end{equation}
where $\eta$ is a given element of $\F_{q^2}\setminus\F_q$ with
$\eta^q=-\eta$ and $y=A+(B^{q^2}+B)\eta$, $A,B\in\F_{q^\ell}$.\qed
\end{prop}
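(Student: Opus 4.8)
The goal is to rewrite the multiplication of $P(q,\ell)^{t*}$, given in Equation~(\ref{form:multiplSimplBr1}), in terms of the new coordinates $A,B\in\F_{q^\ell}$ defined through the decomposition $y=A+(B^{q^2}+B)\eta$. The plan is to establish three facts and then assemble them. First I would verify that the stated decomposition of $y$ is legitimate: since $\{1,\eta\}$ is an $\F_{q^\ell}$-basis of $\F_{q^{2\ell}}$ (because $\eta^{q^\ell}=(\eta^q)^{q^{\ell-1}}=(-\eta)^{q^{\ell-1}}=-\eta$, as $\ell-1$ is even and $\eta\in\F_{q^2}$, so $\eta\notin\F_{q^\ell}$), every $y$ splits uniquely as $y=A+C\eta$ with $A,C\in\F_{q^\ell}$; and since $q,\ell$ are odd the map $\phi:\gamma\mapsto\gamma+\gamma^{q^2}$ is invertible on $\F_{q^\ell}$, so $C$ can in turn be written uniquely as $C=B^{q^2}+B$. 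This gives the coordinates $A$ and $B$, together with the formula $A=\tfrac{y+y^{q^\ell}}{2}$ and $B^{q^2}+B=\tfrac{y-y^{q^\ell}}{2\eta}$, obtained by applying the Frobenius $q^\ell$ (which fixes $A,B\in\F_{q^\ell}$ and sends $\eta$ to $-\eta$) and solving the resulting $2\times2$ linear system.

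The second, computational step is the identity $f(y)=B^{q^2}\eta$. Here I would start from $B=\phi^{-1}\!\big(\tfrac{y-y^{q^\ell}}{2\eta}\big)$, expand using the explicit formula for $\phi^{-1}$ displayed just before Equation~(\ref{form-beta2}), and then simplify the four resulting sums by the index substitutions $2t+1:=\ell+2i$ and $2v:=\ell+2j+1$ indicated in the text. The outcome is the closed form $B=\tfrac{1}{4\eta}(y-y^{q^\ell}-\alpha_y-\beta_y-\gamma_y)$, and comparing this with Equation~(\ref{form:f}), which asserts $f(y)^{q^{2\ell-2}}=\tfrac14(y-y^{q^\ell}-g(y))$ where $g(y)=\alpha_y+\beta_y+\gamma_y$, we read off $4\eta B=4 f(y)^{q^{2\ell-2}}$, i.e.\ $\eta B=f(y)^{q^{2\ell-2}}$. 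Raising to the power $q^2$ and using $\eta^{q^2}=\eta$ (since $\eta\in\F_{q^2}$) together with $y^{q^{2\ell}}=y$ gives $\eta\,B^{q^2}=f(y)$, which is exactly the desired identity $f(y)=B^{q^2}\eta$.

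With these in hand the assembly is immediate: substitute $A=\tfrac{y+y^{q^\ell}}{2}$ and $f(y)=B^{q^2}\eta$ into Equation~(\ref{form:multiplSimplBr1}). The coefficient of $x^{q^2}$ becomes $f(y)=B^{q^2}\eta$, and the coefficient of $x^{q^{2\ell-2}}$ becomes $f(y)^{q^{2\ell-2}}=\eta B$, where the last equality is precisely the relation $\eta B=f(y)^{q^{2\ell-2}}$ extracted above. This yields $x\bullet y=A x+B^{q^2}\eta\,x^{q^2}+B\eta\,x^{q^{2\ell-2}}$, which is the claimed multiplication.

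I expect the only genuine obstacle to be the bookkeeping in the second step: the expansion of $\phi^{-1}$ produces several sums with shifted Frobenius exponents (indices $q^{2i}$, $q^{\ell+2i}$, $q^{2j+1}$, $q^{\ell+2j+1}$), and one must track the signs and the reindexing carefully, reducing all exponents modulo $q^{2\ell}-y$ and matching them against the definitions of $\alpha_y,\beta_y,\gamma_y$. Everything else is linear algebra over $\F_{q^\ell}$ and repeated use of $\eta^q=-\eta$; the parity hypotheses on $q$ and $\ell$ enter only to guarantee invertibility of $\phi$ and the basis property of $\{1,\eta\}$, and otherwise play no role.
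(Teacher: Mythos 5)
Your argument is correct and follows the paper's own derivation essentially step for step: the same decomposition $y=A+(B^{q^2}+B)\eta$ with $A=\tfrac{y+y^{q^\ell}}{2}$, the same expansion of $B=\phi^{-1}\bigl(\tfrac{y-y^{q^\ell}}{2\eta}\bigr)$ via the index substitutions $2t+1:=\ell+2i$ and $2v:=\ell+2j+1$, and the same comparison with (\ref{form:f}) yielding $f(y)=B^{q^2}\eta$ before substituting into (\ref{form:multiplSimplBr1}). The only blemishes are cosmetic --- ``modulo $q^{2\ell}-y$'' should read ``modulo $y^{q^{2\ell}}-y$'' --- while your explicit verification that $\eta^{q^\ell}=-\eta$, so that $\{1,\eta\}$ is indeed an $\F_{q^\ell}$--basis, is a detail the paper leaves implicit.
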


\subsubsection*{The symplectic version of $\overline{B}(q,\ell,d,\beta)$--presemifields}

Let $q$ be an odd prime power, $\ell$ and $d$ be integers such
that $0<d<2\ell$, $\ell+d$ is odd and $\gcd(\ell,d)=1$. Then a
commutative $\overline{B}(q,\ell,d,\beta)$--presemifield is of
type $(\F_{q^{2\ell}},+,\star)$, where
%\begin{equation}\label{MultiplBHrinominati1}
$$
x\star y=
xy^{q^\ell}+x^{q^\ell}y+[\beta(xy^{q^d}+x^{q^d}y)+\beta^{q^\ell}(xy^{q^d}+x^{q^d}y)^{q^\ell}]\omega,$$
%\end{equation}
with $\beta$ a nonsquare in $\F_{q^{2\ell}}$  and
$\omega^{q^\ell}=-\omega$ (see Remark \ref{rm:BHconditions}). By using \cite[Lemmas 1,
2]{LuMaPoTr2011}, the transpose semifield
$\overline{B}^{t}(q,\ell,d,\beta)=(\F_{q^{2\ell}},+, \star^t)$ of
$\overline{B}(q,\ell,d,\beta)$ is defined by
$$x\star^t y=(x+x^{q^\ell})y^{q^\ell}+\beta^{q^{2\ell-d}}\omega^{q^{2\ell-d}}(x^{q^{2\ell-d}}-x^{q^{\ell-d}})y^{q^{2\ell-d}}+
\beta\omega(x-x^{q^{\ell}})y^{q^{d}}.$$ Hence
$\overline{B}^{\,t*}(q,\ell,d,\beta)=(\F_{q^{2\ell}},+,\star^{t*})$,
where
$$x\star^{t*} y=(y+y^{q^\ell})x^{q^\ell}+\beta^{q^{2\ell-d}}\omega^{q^{2\ell-d}}(y^{q^{2\ell-d}}-y^{q^{\ell-d}})x^{q^{2\ell-d}}+
\beta\omega(y-y^{q^{\ell}})x^{q^{d}}.$$ Since $\{1,\omega\}$
is an $\F_{q^\ell}$--basis of $\F_{q^{2\ell}}$, putting
$y=A+B\omega$, with $A,B\in\F_{q^\ell}$ and recalling that
$\omega^{q^\ell}=-\omega$, and hence
$\omega^2=\sigma\in\F_{q^\ell}^*$, we get
\begin{prop}\label{prop:MultSimplBR}
The symplectic presemifield
$\overline{B}(q,\ell,d,\beta)^{t*}=(\F_{q^{2\ell}},+,\star')$
arising from the commutative semifield
$\overline{B}(q,\ell,d,\beta)$ has multiplication
\begin{equation}\label{form:multiplSimplBr2}
x\star' y=2Ax^{q^\ell}+2\sigma^{q^{2\ell-d}}
\beta^{q^{2\ell-d}}B^{q^{2\ell-d}} x^{q^{2\ell-d}}+2\sigma\beta
Bx^{q^{d}},
\end{equation}
where $\beta$ is a nonsquare in $\F_{q^{2\ell}}$ and $y=A+B\omega$
with $A,B\in\F_{q^\ell}$, $\sigma$ is a nonsquare in $\F_{q^\ell}$
and $\omega^2=\sigma$.\qed
\end{prop}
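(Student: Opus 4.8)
The plan is to start from the explicit expression for $x\star^{t*}y$ displayed just before the statement and substitute $y=A+B\omega$ with $A,B\in\F_{q^\ell}$, reducing everything to the $\F_{q^\ell}$--basis $\{1,\omega\}$ of $\F_{q^{2\ell}}$ by means of the two relations $\omega^{q^\ell}=-\omega$ and $\omega^2=\sigma$. Since $A,B\in\F_{q^\ell}$ are fixed by the $q^\ell$--Frobenius, the first and third summands are immediate: I would compute $y+y^{q^\ell}=2A$ and $y-y^{q^\ell}=2B\omega$, so that $(y+y^{q^\ell})x^{q^\ell}=2Ax^{q^\ell}$ gives the first term, while the last summand becomes $\beta\omega\,(y-y^{q^\ell})x^{q^d}=2\beta B\,\omega^2 x^{q^d}=2\sigma\beta Bx^{q^d}$, which is exactly the third term of the claimed multiplication.

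The only real bookkeeping is the middle term, governed by $y^{q^{2\ell-d}}-y^{q^{\ell-d}}$. Here I would first note that $A^{q^m}$ and $B^{q^m}$ depend only on $m$ modulo $\ell$, and since $2\ell-d\equiv\ell-d\pmod{\ell}$ the scalar coefficients coincide; hence the $A$--parts cancel and
$$y^{q^{2\ell-d}}-y^{q^{\ell-d}}=B^{q^{\ell-d}}\bigl(\omega^{q^{2\ell-d}}-\omega^{q^{\ell-d}}\bigr).$$
To evaluate the bracket I would prove the auxiliary sign rule that every $z\in\F_{q^{2\ell}}\setminus\F_{q^\ell}$ with $z^2\in\F_{q^\ell}$ satisfies $z^{q^\ell}=-z$: indeed such a $z$ is a root of $X^2-z^2\in\F_{q^\ell}[X]$, the two roots are $\pm z$, and the nontrivial $\F_{q^\ell}$--automorphism $x\mapsto x^{q^\ell}$ interchanges them because $z\notin\F_{q^\ell}$. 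Applying this to $z=\omega^{q^{\ell-d}}$, which lies outside $\F_{q^\ell}$ and has $z^2=\sigma^{q^{\ell-d}}\in\F_{q^\ell}$, yields $\omega^{q^{2\ell-d}}=-\omega^{q^{\ell-d}}$, and therefore $y^{q^{2\ell-d}}-y^{q^{\ell-d}}=-2B^{q^{\ell-d}}\omega^{q^{\ell-d}}$.

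Substituting this into the middle summand $\beta^{q^{2\ell-d}}\omega^{q^{2\ell-d}}\bigl(y^{q^{2\ell-d}}-y^{q^{\ell-d}}\bigr)x^{q^{2\ell-d}}$ and using $\omega^{q^{2\ell-d}}\omega^{q^{\ell-d}}=-\sigma^{q^{\ell-d}}$ together with the periodicity identities $B^{q^{\ell-d}}=B^{q^{2\ell-d}}$ and $\sigma^{q^{\ell-d}}=\sigma^{q^{2\ell-d}}$, the two minus signs cancel and the term collapses to $2\sigma^{q^{2\ell-d}}\beta^{q^{2\ell-d}}B^{q^{2\ell-d}}x^{q^{2\ell-d}}$. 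Summing the three contributions gives $x\star' y=2Ax^{q^\ell}+2\sigma^{q^{2\ell-d}}\beta^{q^{2\ell-d}}B^{q^{2\ell-d}}x^{q^{2\ell-d}}+2\sigma\beta Bx^{q^d}$, which is the asserted formula. I do not expect a genuine obstacle here; the proof is an elementary but sign--sensitive computation, and the one point that must be handled with care is precisely the cancellation of the $A$--contributions in the middle term together with the correct tracking of the minus sign coming from $\omega^{q^\ell}=-\omega$ at the shifted Frobenius power $q^{\ell-d}$.
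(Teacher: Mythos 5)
Your computation is correct and is exactly the argument the paper leaves implicit: the paper proves this proposition simply by substituting $y=A+B\omega$ into the displayed formula for $x\star^{t*}y$ and using $\omega^{q^\ell}=-\omega$, $\omega^2=\sigma$, which is precisely what you carry out, including the key cancellation of the $A$--parts in $y^{q^{2\ell-d}}-y^{q^{\ell-d}}$ via periodicity modulo $\ell$. Your only detour is the auxiliary sign rule for $z$ with $z^2\in\F_{q^\ell}$, which is correct but unnecessary, since $\omega^{q^{2\ell-d}}=\bigl(\omega^{q^{\ell}}\bigr)^{q^{\ell-d}}=(-\omega)^{q^{\ell-d}}=-\omega^{q^{\ell-d}}$ follows in one line because $q$ is odd.
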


\begin{remark}\label{rem:omega}{\rm
Note that if $\sigma$ and $\sigma'$ are two nonsquare elements of
$\F_{q^{\ell}}$ , then $\sigma'=t\sigma$, where $t$ is a nonzero
square in $\F_{q^\ell}$. So, replacing $\beta$ by $t\beta$ in
(\ref{form:multiplSimplBr2}), we may substitute $\sigma$ with
$\sigma'$. It follows that, when {\bf $\ell$ is odd}, in order to
study, up to isotopy, the $\cBH$ presemifields we may
suppose wlg that $\sigma$ is a nonsquare in $\F_{q}$ and hence
$\omega \in \F_{q^{2}}\setminus \F_q$.}
\end{remark}

\subsubsection*{The isotopism theorem}

Let start by proving the following
\begin{theorem}\label{prop:isotopia}
Let $q$ be an odd prime power, let $\ell$ and $d$ be odd and even
integers, respectively, such that $0<d<2\ell$ and
$\gcd(\ell,d)=1$. The symplectic presemifield
$\overline{B}(q,\ell,d,\beta)^{t*}=(\F_{q^{2\ell}},+,\star')$,
whose multiplication is given in (\ref{form:multiplSimplBr2}), is
isotopic to a presemifield $(\F_{q^{2\ell}},+,\star'')$ whose
multiplication is given by
$$x\star'' y=2\Bigg(Ax+\sigma
B\omega \frac{\beta}{\xi^{q^{\ell}}} x^{q^d}+\sigma
B^{q^{2\ell-d}}\omega\frac{\beta^{q^{2\ell-d}}}{\xi^{q^\ell}}x^{q^{2\ell-d}}\Bigg),$$
where $y=A+B\omega$ with $A,B\in\F_{q^\ell}$,
$\omega\in\F_{q^2}\setminus\F_q$ with $\omega^2=\sigma\in\F_q^*$,
and $\xi$ is an element of $\F_{q^{2\ell}}$ such that
$\xi^{q^{\ell+d}-1}=\beta^{1-q^\ell}$ and $\xi^{q^\ell+1}=\sigma$.
\end{theorem}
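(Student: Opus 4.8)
The plan is to apply the spread-set criterion of Proposition~\ref{prop:IsotopicSemifAndSpreadSets}: I will produce $\F_p$--linear permutations $M,N,L$ of $\F_{q^{2\ell}}$ for which $S''=L\,S'\,M^{-1}$, where $S'=\{\varphi'_y\}$ and $S''=\{\varphi''_y\}$ are the spread sets read off from (\ref{form:multiplSimplBr2}) and from the displayed multiplication of the statement. Since both spread sets are contained in $End(\F_{q^{2\ell}},\F_q)$, Theorem~\ref{thm:semilinearity} tells me that any admissible $M,L$ are $\F_q$--semilinear with a common companion automorphism, so the search may be restricted to maps assembled from powers of the Frobenius $x\mapsto x^{q}$ together with scalar multiplications; $N$ is then forced by $M,L$. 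Writing $y=A+B\omega$ with $A,B\in\F_{q^\ell}$, the additive pieces of $\varphi'_y$ sit on $x^{q^\ell}$ (coefficient $2A$), on $x^{q^d}$ (coefficient $2\sigma\beta B$) and on $x^{q^{2\ell-d}}$ (coefficient $2\sigma\beta^{q^{2\ell-d}}B^{q^{2\ell-d}}$), whereas those of $\varphi''_y$ sit on $x$, on $x^{q^d}$ and on $x^{q^{2\ell-d}}$ with the prescribed coefficients. The crucial structural observation is that the two ``$B$--terms'' already occupy the same monomials $x^{q^d},x^{q^{2\ell-d}}$ in both, so the whole issue is to carry the leading term from $x^{q^\ell}$ to $x$ while leaving the other two monomials in place and merely rescaling their coefficients.

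First I would record the elementary facts that drive the bookkeeping: $A,B\in\F_{q^\ell}$ are fixed by the $q^\ell$--th power, $\sigma=\omega^2\in\F_q^*$ is fixed by every $q^j$ (Remark~\ref{rem:omega}), and $\omega^{q^\ell}=-\omega$. Next I would settle the existence of $\xi$. The condition $\xi^{q^\ell+1}=\sigma$ requires the norm $N_{\F_{q^{2\ell}}/\F_{q^\ell}}(\xi)$ to equal $\sigma\in\F_q^*\subseteq\F_{q^\ell}^*$, which is solvable since the norm is surjective; its solutions form a coset of the group of norm--one elements. On that coset the map $\xi\mapsto\xi^{q^{\ell+d}-1}$ again takes norm--one values (note $N(\beta^{1-q^\ell})=1$ and, because $\sigma\in\F_q$, also $\sigma^{q^{\ell+d}-1}=1$), so a compatibility/counting argument using $\gcd(\ell,d)=1$ and $\ell+d$ odd (together with $\beta$ being a nonsquare, via Lemma~\ref{lemma:beta&s}) yields a single $\xi$ satisfying both relations. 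I expect the scalar factors of $L$ and $M$ to be built exactly from $\xi$, $\omega$ and powers of $\beta$, with $\xi^{q^\ell}$ entering through $\xi^{q^\ell+1}=\sigma$.

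With candidate maps in hand I would compute $L\circ\varphi'_y\circ M^{-1}$, collect the resulting monomials, and compare term by term with $\varphi''_{N(y)}$, using $\xi^{q^{\ell+d}-1}=\beta^{1-q^\ell}$ to reconcile the $\beta$--powers on the $x^{q^d}$ and $x^{q^{2\ell-d}}$ terms and $\xi^{q^\ell+1}=\sigma$ to absorb the $\sigma$ and $\omega$ factors; the formulas obtained for the images of $A$ and $B$ then define $N$, and I would verify that $N$ is an $\F_q$--semilinear bijection (invertibility being immediate once its defining coefficients are nonzero).

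The hard part, and the reason both defining relations for $\xi$ are genuinely needed, is the exponent reconciliation. Moving the leading term from $x^{q^\ell}$ to $x$ forces a $q^\ell$--twist, but any uniform $q^\ell$--twist simultaneously drags $x^{q^d},x^{q^{2\ell-d}}$ to $x^{q^{\ell+d}},x^{q^{\ell-d}}$, which for $\ell$ odd and $\gcd(\ell,d)=1$ never coincide with $\{x^{q^d},x^{q^{2\ell-d}}\}$; a short calculation shows that no single Frobenius--times--scalar pair $(M,L)$ can fix all three monomials at once. Hence $L$ (or $M$) must exploit the full $\F_q$--semilinear freedom, i.e. be a genuine $\F_q$--polynomial whose several Frobenius components cancel the unwanted monomials, and the whole purpose of introducing $\xi$ is to tune the scalars so that exactly the three monomials $\{x,x^{q^d},x^{q^{2\ell-d}}\}$ survive with the coefficients prescribed in the statement. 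Organizing this cancellation cleanly—so that the surviving coefficients are precisely $2A$, $2\sigma\omega\beta B/\xi^{q^\ell}$ and $2\sigma\omega\beta^{q^{2\ell-d}}B^{q^{2\ell-d}}/\xi^{q^\ell}$—is where essentially all the computational weight of the proof lies.
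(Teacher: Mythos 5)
Your plan has the paper's skeleton — reduce to the spread--set criterion of Proposition \ref{prop:IsotopicSemifAndSpreadSets}, use multi--term $\F_q$--semilinear (in fact $\F_{q^\ell}$--linear) maps built from $\xi$, $\omega$ and powers of $\beta$, and let the two relations on $\xi$ drive the cancellation of the stray monomials — but it stops exactly where the content of the theorem begins. The paper exhibits the maps explicitly: $\phi(x)=x-\frac{\omega}{\xi^{q^\ell}}x^{q^\ell}$ and $\psi(x)=\frac{\omega}{\xi}x+x^{q^\ell}$, with $\psi^{-1}(x)=\frac 12\bigl(\frac{\omega}{\xi^{q^\ell}}x+x^{q^\ell}\bigr)$ and the key identity $\psi^{-1}(\phi(x)^{q^\ell})=x$, which transports the leading term $2A\,x^{q^\ell}$ to $2Ax$; it then computes $\psi^{-1}\circ\varphi_{A,B}\circ\phi$ and checks that the coefficient $f_1$ of the unwanted monomial $x^{q^{\ell-d}}$ vanishes — this is precisely where both relations $\xi^{q^{\ell+d}-1}=\beta^{1-q^\ell}$ and $\xi^{q^\ell+1}=\sigma=\omega^2$ are consumed — while the surviving coefficients collapse to $2\sigma\omega\beta B/\xi^{q^\ell}$ on $x^{q^d}$ and $2\sigma\omega\beta^{q^{2\ell-d}}B^{q^{2\ell-d}}/\xi^{q^\ell}$ on $x^{q^{2\ell-d}}$, yielding the isotopism $(\phi,\mathrm{id},\psi)$. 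You correctly predict that a single Frobenius--times--scalar pair cannot work and that genuine multi--term maps are needed, but you never write down candidate maps $M,N,L$ and explicitly defer the verification (``where essentially all the computational weight of the proof lies''). Since the theorem asserts a specific multiplication $\star''$ with specific coefficients, producing the maps and performing that cancellation \emph{is} the proof; as written, nothing is established.

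The second gap is the existence of $\xi$, which you also leave to an unexecuted ``compatibility/counting argument''. Your sketch (first solve the norm condition $\xi^{q^\ell+1}=\sigma$, then observe both sides of $\xi^{q^{\ell+d}-1}=\beta^{1-q^\ell}$ have norm one over $\F_{q^\ell}$) only verifies a necessary condition and does not show a common solution exists. The paper argues in the opposite, easier, order: since $\gcd(2\ell,\ell+d)=\gcd(\ell,d)=1$ gives $\gcd(q^{2\ell}-1,q^{\ell+d}-1)=q-1$ and $(\beta^{1-q^\ell})^{\frac{q^{2\ell}-1}{q-1}}=1$, the equation $x^{q^{\ell+d}-1}=\beta^{1-q^\ell}$ has exactly $q-1$ solutions, any two differing by a factor in $\F_q^*$; for any solution $\xi$ one has $\xi^{q^\ell+1}\in\F_q^*$, and — this is where the hypothesis that $\beta$ is a nonsquare is actually used, not merely cited — $\xi^{q^\ell+1}$ is a nonsquare of $\F_q$; scaling $\xi$ by $c\in\F_q^*$ scales $\xi^{q^\ell+1}$ by $c^2$, so the values $\xi^{q^\ell+1}$ sweep the full set of nonsquares of $\F_q$, and $\sigma$ is attained. (Incidentally, your claim of ``a single $\xi$'' is off: both $\pm\xi$ satisfy the two relations.) In summary, the proposal is a correct high--level plan matching the paper's route, but both load--bearing steps — the construction of the isotopism and the existence of $\xi$ — are missing.
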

\begin{proof}
By Proposition \ref{prop:MultSimplBR} and Remark \ref{rem:omega},
the spread set associated with the symplectic presemifield
$\overline{B}(q,\ell,d,\beta)^{t*}=(\F_{q^{2\ell}},+,\star')$ is
%\begin{equation}\label{form:spreadsetSimplBH}
$$S=\{\varphi_y=\varphi_{A,B}:x\mapsto
2Ax^{q^\ell}+2\sigma \beta^{q^{2\ell-d}}B^{q^{2\ell-d}}
x^{q^{2\ell-d}}+2\sigma\beta Bx^{q^{d}}\,|\ y=A+B\omega,\,A,B
\in\F_{q^\ell}\},
$$%\end{equation}
where $\beta$ and $\sigma$ are nonsquares in $\F_{q^{2\ell}}$ and
$\F_q$, respectively.

Since $\gcd(q^{2\ell}-1,q^{\ell+d}-1)=q-1$ and
$(\beta^{1-q^\ell})^{\frac{q^{2\ell}-1}{q-1}}=1$ , the following
equation
\begin{equation}\label{form:xi}
x^{q^{\ell+d}-1}=\beta^{1-q^\ell}.\end{equation} admits $q-1$
distinct solutions in $\F_{q^{2\ell}}$. Moreover, if $\xi$ and
$\bar\xi$ satisfy (\ref{form:xi}), then $\xi/\bar\xi\in\F_q^*$.
Also, if $\xi$ is a solution of (\ref{form:xi}), then
$\xi^{q^\ell+1}$ is a solution of $x^{q^{\ell+d}-1}=1$ and since
$\gcd(q^{2\ell}-1,q^{\ell+d}-1)=q-1$, we get
$\xi^{q^\ell+1}\in\F_q^*$. Moreover, taking into account that
$\beta$ is a nonsquare in $\F_{q^{2\ell}}$, it follows that
$\xi^{q^\ell+1}$ is a nonsquare in $\F_q$. Indeed if
$(\xi^{q^\ell+1})^{\frac{q-1}2}=1$, then $(\frac
1\beta)^{\frac{q^{2\ell}-1}2}=(\xi^{q^{\ell+d}-1})^{\frac{q^{\ell}+1}2}=(\xi^{q^\ell+1})^{\frac{q^{\ell+d}-1}2}=1$,
a contradiction. Hence the set $\{\xi^{q^{\ell}+1}\,|\ \mbox{$\xi$
is a solution of (\ref{form:xi})}\}\subset \F_q$ is the set of
nonsquares in $\F_q$. This means that we can choose
$\xi\in\F_{q^{2\ell}}$, satisfying (\ref{form:xi}) and such that
\begin{equation}\label{form:xi1}
\xi^{q^\ell+1}=\sigma=\omega^2. \end{equation}

\medskip

Now, consider the invertible maps  of $\F_{q^{2\ell}}$
$$\psi:x\mapsto \frac\omega\xi x+x^{q^{\ell}} \mbox{\quad and \quad}\phi:x\mapsto
x-\frac\omega{\xi^{q^\ell}} x^{q^{\ell}}$$ and note that
$$\psi^{-1}:x\mapsto \frac
12(\frac\omega{\xi^{q^\ell}}x+x^{q^\ell}) \mbox{\quad and
\quad}\psi^{-1}( \phi(x)^{q^\ell})=x.$$ Since  $\psi$ and $\phi$
are linear maps over $\F_{q^\ell}$, for each $x\in\F_{q^{2\ell}}$
we have
\begin{eqnarray}\label{form:isotopia3}
\psi^{-1}\circ \varphi_{A,B}\circ\phi
(x)&=&2(\psi^{-1}(A(\phi(x))^{q^\ell}+\sigma
\beta^{q^{2\ell-d}}B^{q^{2\ell-d}}
(\phi(x))^{q^{2\ell-d}}+\sigma\beta
B(\phi(x))^{q^{d}}))\nonumber\\
&=& 2(Ax+\sigma B^{q^{2\ell-d}}\psi^{-1}(f(x))+\sigma
B\psi^{-1}(g(x))),
\end{eqnarray}
where $f(x)=(\beta\phi(x))^{q^{2\ell-d}}$ and
$g(x)=\beta(\phi(x))^{q^d}$.

Then, taking into account that $\omega^q=-\omega$, direct
computations show that
$$\psi^{-1}(f(x))=\frac 12 f_1x^{q^{\ell-d}}+\frac
12f_2x^{q^{2\ell-d}},$$ with
$f_1=-\frac{\omega^2}{\xi^{q^\ell+q^{\ell-d}}}\beta^{q^{2\ell-d}}+\beta^{q^{\ell-d}}$
and
$f_2=\frac\omega{\xi^{q^\ell}}\beta^{q^{2\ell-d}}+\frac{\omega}{\xi^{q^{2\ell-d}}}\beta^{q^{\ell-d}}$.

By (\ref{form:xi}), we get
$\beta^{q^\ell}=\frac{\beta\xi}{\xi^{q^{\ell+d}}}$ and elevating
to the $q^{2\ell-d}$--th power we have
$\beta^{q^{\ell-d}}=\beta^{q^{2\ell-d}}\xi^{q^{\ell}(q^{\ell-d}-1)}$.
From (\ref{form:xi1}) it follows
$\beta^{q^{\ell-d}}=\beta^{q^{2\ell-d}}(\frac{\omega^2}{\xi})^{(q^{\ell-d}-1)}=(\beta^{q^{2\ell-d}}\frac{\omega^2}{\xi^{q^{\ell-d}}})\frac\xi{\omega^2}
=\beta^{q^{2\ell-d}}\frac{\omega^2}{\xi^{q^{\ell-d}+q^\ell}}$;
hence $f_1=0$.

Also,
$f_2=\omega(\frac{\beta^{q^{\ell-d}}}{\xi^{q^{2\ell-d}}}+\frac{\beta^{q^{2\ell-d}}}{\xi^{q^\ell}})$
and by (\ref{form:xi})
 we have $f_2=2\omega
 \frac{\beta^{q^{2\ell-d}}}{\xi^{q^{\ell}}}$. Hence, $\psi^{-1}(f(x))=\omega
 \frac{\beta^{q^{2\ell-d}}}{\xi^{q^{\ell}}} x^{q^{2\ell-d}}$, and using similar arguments we have $\psi^{-1}(g(x))=\omega
 \frac{\beta}{\xi^{q^{\ell}}} x^{q^d}$. Then, by
 (\ref{form:isotopia3}), we get
 $$\psi^{-1}\circ \varphi_{A,B}\circ\phi(x)=2Ax+2\sigma B\omega \frac{\beta}{\xi^{q^{\ell}}} x^{q^d}+2\sigma
B^{q^{2\ell-d}}\omega\frac{\beta^{q^{2\ell-d}}}{\xi^{q^\ell}}x^{q^{2\ell-d}}.$$
Hence, $$\psi^{-1}\circ \varphi_y\circ\phi(x)=x\star'' y,$$ i.e.
\begin{equation}\label{form:isotopism1}
\phi(x)\star'y=\psi(x\star'' y).
\end{equation}
This means that $(\phi,id,\psi)$ is an isotopism between the two
presemifields.  The theorem is proven.
\end{proof}

\begin{theorem}\label{thm:isotopyPqlBH}
Each $\cP$ semifield is isotopic to a $\cBH$ presemifield.
\end{theorem}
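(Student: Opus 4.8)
The plan is to transport everything to the symplectic side and then match spread sets. First I would invoke the Kantor correspondence: by part $iii)$ of Proposition~\ref{prop:DualTranspIsot}, the commutative $\cP$ semifield $P(q,\ell)$ is isotopic to the commutative presemifield $\overline{B}(q,\ell,2,\beta)$ if and only if their symplectic duals $P(q,\ell)^{t*}$ and $\overline{B}(q,\ell,2,\beta)^{t*}$ are isotopic. Since $\ell>1$ is odd, the value $d=2$ is even, coprime to $\ell$, and satisfies $0<2<2\ell$, so Theorem~\ref{prop:isotopia} applies and lets me replace $\overline{B}(q,\ell,2,\beta)^{t*}$ by the explicit presemifield $(\F_{q^{2\ell}},+,\star'')$ of that theorem. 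Thus it suffices to exhibit, for a suitable nonsquare $\beta\in\F_{q^{2\ell}}$, an isotopism between $P(q,\ell)^{t*}$ (Proposition~\ref{prop:MultSimplBR1}) and $(\F_{q^{2\ell}},+,\star'')$.

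Next I would compare the two spread sets. Both consist of the $\F_q$-linear maps $x\mapsto c_0x+c_2x^{q^2}+c_{-2}x^{q^{2\ell-2}}$ supported on the exponents $\{1,q^2,q^{2\ell-2}\}$, so each map is determined by the triple $(c_0,c_2,c_{-2})$. For $S_P$ one reads off $c_0=A\in\F_{q^\ell}$, $c_{-2}=B\eta\in\eta\F_{q^\ell}$, and $c_2=B^{q^2}\eta=c_{-2}^{q^2}$ (using $\eta^{q^2}=\eta$); hence $S_P$ is exactly the set of such trinomials with $c_0\in\F_{q^\ell}$, $c_{-2}\in\eta\F_{q^\ell}$, and twist $c_2/c_{-2}^{q^2}=1$. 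For $S''$ (Theorem~\ref{prop:isotopia} with $d=2$) a direct computation gives $c_0\in\F_{q^\ell}$, $c_{-2}\in\kappa\F_{q^\ell}$ with $\kappa=2\sigma\omega\beta^{q^{2\ell-2}}/\xi^{q^\ell}$, and twist $c_2/c_{-2}^{q^2}=\xi^{q^{\ell+2}-q^\ell}=(\xi^{q^\ell})^{q^2-1}$. So each spread set is pinned down by three invariants: the $\F_{q^\ell}$-range of $c_0$, the $\F_{q^\ell}$-coset of $c_{-2}$, and the twist factor.

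I would then match these invariants via Proposition~\ref{prop:IsotopicSemifAndSpreadSets}, seeking $\F_p$-linear permutations $L,M$ with $S''=LS_PM^{-1}$. Taking $L$ and $M^{-1}$ to be the scalar maps $x\mapsto\rho x$ and $x\mapsto\tau x$ (with $\rho,\tau\in\F_{q^{2\ell}}^*$, hence legitimate by Theorem~\ref{thm:semilinearity}), the image $LS_PM^{-1}$ is again a trinomial set of the same shape, now with $c_0$-range $\rho\tau\,\F_{q^\ell}$, $c_{-2}$-coset $\rho\tau^{q^{2\ell-2}}\eta\,\F_{q^\ell}$, and twist $(\tau/\rho)^{q^2-1}$. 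Equating invariants with $S''$ gives three conditions: (I) $\rho\tau\in\F_{q^\ell}$; (II) $\rho\tau^{q^{2\ell-2}}\eta\in\kappa\F_{q^\ell}$; (III) $(\tau/\rho)^{q^2-1}=(\xi^{q^\ell})^{q^2-1}$. Condition (III) is always consistent, since its right-hand side is manifestly a $(q^2-1)$-th power and hence lies in the group of twists reachable from $S_P$; it is solved by any $\tau/\rho\in\xi^{q^\ell}\F_{q^2}^*$. With the twist matched, (I) normalizes the constant term, and (II) absorbs the remaining freedom, determining $\beta$.

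The main obstacle I anticipate is the bookkeeping of this final step: solving (I)--(III) simultaneously in $\rho,\tau$ while guaranteeing that the resulting $\beta$ is a nonsquare of $\F_{q^{2\ell}}$, which is exactly what makes $\overline{B}(q,\ell,2,\beta)$ a genuine presemifield (Remark~\ref{rm:BHconditions}, using that $\ell+2$ is odd). Here the defining relations $\xi^{q^{\ell+2}-1}=\beta^{1-q^\ell}$ and $\xi^{q^\ell+1}=\sigma$ from Theorem~\ref{prop:isotopia} must be exploited to trade freedom between $\xi$ and $\beta$, and the fact that $\F_{q^\ell}^*\subseteq(\F_{q^{2\ell}}^*)^2$ (since $(q^\ell+1)/2$ is an integer, so $a^{(q^{2\ell}-1)/2}=1$ for every $a\in\F_{q^\ell}^*$) controls the square class along the coset in which $\beta$ is forced to lie. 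Once such $\rho,\tau,\beta$ are produced, Proposition~\ref{prop:IsotopicSemifAndSpreadSets} yields an isotopism $(M,N,L)$ between $P(q,\ell)^{t*}$ and $\overline{B}(q,\ell,2,\beta)^{t*}$, and the Kantor reduction of the first paragraph then returns the sought isotopy between $P(q,\ell)$ and $\overline{B}(q,\ell,2,\beta)$, proving the theorem.
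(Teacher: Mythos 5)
Your reduction to the symplectic side via Proposition~\ref{prop:DualTranspIsot}$iii)$ and Theorem~\ref{prop:isotopia}, and your invariant calculus, are correct: both $S_P$ and $S''$ are indeed ``Cartesian'' trinomial sets determined by the $c_0$--range, the $\F_{q^\ell}$--coset of $c_{-2}$, and the constant twist $c_2/c_{-2}^{q^2}$ (equal to $1$ for $S_P$ and to $(\xi^{q^\ell})^{q^2-1}$ for $S''$), and scalar conjugation acts on these invariants exactly as you compute. But the proof stops precisely where the theorem begins. Your proposed order of solution --- solve (III) for $\tau/\rho$, use (I) to normalize, let (II) ``determine $\beta$'' --- is circular: condition (III) involves $\xi$, and $\xi$ is tied to $\beta$ by $\xi^{q^{\ell+2}-1}=\beta^{1-q^\ell}$ and $\xi^{q^\ell+1}=\sigma$, so all three conditions are coupled through $\beta$ from the start and cannot be decoupled in that order. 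Moreover you never verify that a solution exists with $\beta$ a \emph{nonsquare} of $\F_{q^{2\ell}}$, without which $\overline{B}(q,\ell,2,\beta)$ is not even a presemifield (Remark~\ref{rm:BHconditions}); your closing remark that the ``bookkeeping of this final step'' is the anticipated obstacle concedes that the essential existence statement is missing. (A minor misreading: Theorem~\ref{thm:semilinearity} constrains $L,M$ to be $\F_{q^2}$--semilinear in any isotopism; it does not ``legitimize'' the scalar ansatz --- that ansatz is fine only if a solution in it is actually exhibited.)

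The paper closes exactly this gap by a single explicit choice that trivializes all three of your invariants at once: take $\beta=\bar\beta\in\F_{q^2}$ with $\bar\beta^{q+1}=\sigma^{-1}$ (such $\bar\beta$ exists since the norm $\F_{q^2}^*\to\F_q^*$ is surjective). Then $\bar\beta$ is automatically a nonsquare of $\F_{q^2}$ (norms of squares are squares, and $\sigma^{-1}$ is a nonsquare of $\F_q$), hence a nonsquare of $\F_{q^{2\ell}}$ because $[\F_{q^{2\ell}}:\F_{q^2}]=\ell$ is odd; and $\xi=\bar\beta^{-1}$ satisfies both defining relations, since $\ell$ odd and $\bar\beta\in\F_{q^2}$ give $\bar\beta^{q^\ell}=\bar\beta^q$, so $\xi^{q^\ell+1}=\bar\beta^{-(q+1)}=\sigma$. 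With this choice your twist is $(\xi^{q^\ell})^{q^2-1}=(\bar\beta^{q^2-1})^{-q}=1$ and your $\kappa=2\sigma\omega\bar\beta^{q^{2\ell-2}}\xi^{-q^\ell}=2\sigma\omega\bar\beta^{1+q}=2\omega\in\eta\F_q^*$, so $S''$ and $S_P$ coincide \emph{as sets} with $\rho=\tau=1$; all that remains is the relabeling $h:A+B\omega\mapsto 2A+2(B^{q^{2\ell-2}}+B)\omega$ of the $y$--parameter (the map $N$, which Proposition~\ref{prop:IsotopicSemifAndSpreadSets} supplies in any case and which is invertible because $q$ and $\ell$ are odd), yielding the isotopism $(\phi,h^{-1},\psi)$ between $P(q,\ell)^{t*}$ and $\overline{B}(q,\ell,2,\bar\beta)^{t*}$. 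To repair your argument you must make this (or an equivalent) choice explicit; as written, the existence of a consistent nonsquare $\beta$ is assumed, not proved.
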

\begin{proof}
By Proposition \ref{prop:MultSimplBR1} the symplectic presemifield
$P(q,\ell)^{t*}=(\F_{q^{2\ell}},+,\bullet)$, $q$ odd and $\ell>1$
odd, arising from the commutative semifield $P(q,\ell)$ has
multiplication
$$
x\bullet y=A x+B^{q^2}\eta x^{q^2}+B\eta x^{q^{2\ell-2}},
$$
where $\eta^q=-\eta$ and $y=A+(B^{q^2}+B)\eta$ with
$A,B\in\F_{q^\ell}$.

Put $d=2$ in Theorem \ref{prop:isotopia} and choose
$\beta=\bar\beta$ as a nonsquare in $\F_{q^{2\ell}}$ belonging to
$\F_{q^2}$ such that ${\bar\beta}^{q+1}=\frac 1\sigma$. Then
${\bar\beta}^{\,-1}$ is a solution of Equation (\ref{form:xi}) and
since ${\bar\beta}^{q^\ell+1}={\bar\beta}^{q+1}=\frac 1\sigma$, we
can fix $\xi={\bar\beta}^{\,-1}$. By Theorem \ref{prop:isotopia} the
symplectic presemifield $\overline{B}(q,\ell,2,\bar\beta)^{t*}$ is
isotopic to the presemifield $(\F_{q^{2\ell}},+,\star'')$ whose
multiplication is given by
$$x\star'' y=2Ax+2B\omega x^{q^2}+
2B^{q^{2\ell-2}}\omega x^{q^{2\ell-2}},$$ where $\omega^q=-\omega$
and $y=A+B\omega$ with $A,B\in\F_{q^\ell}$. Let $\omega=\alpha
\eta$ and note that $\alpha \in \F_q^*$.

Let $h:\, y=A+B\omega\in\F_{q^{2\ell}}\mapsto\
2A+2(B^{q^{2\ell-2}}+B)\omega\in\F_{q^{2\ell}}$. Since $q$ and
$\ell$ are odd,  $h$ is an invertible $\F_q$--linear map of
$\F_{q^{2\ell}}$. Also, since $h(y)=h(A+B\omega)= 2A+2((\alpha
B^{q^{2\ell-2}})^{q^2}+(\alpha B^{q^{2\ell-2}}))\eta $ we have
$$x\bullet h(y)=x\star''y$$
for each $x,y\in\F_{q^\ell}$, hence by (\ref{form:isotopism1}) we
get
$$\phi(x)\star'h^{-1}(z)=\psi(x\bullet z)$$ for each $x,z\in\F_{q^\ell}$.
Then $(\phi,h^{-1},\psi)$ is an isotopism between $P(q,\ell)^{t*}$
and $\overline{B}(q,\ell,2,\bar\beta)^{t*}$. The theorem is proven.
\end{proof}

By Theorems \ref{prop:isotopia}, \ref{thm:isotopyPqlBH} and by
$iii)$ of Proposition \ref{prop:DualTranspIsot} we can state the
following
  result.

\begin{cor}\label{cor:isotopy}
The triple $(\bar\psi^{-1},\phi,\bar h)$ is an isotopism between
the commutative semifield $P(q,\ell)$ and the presemifield
$\overline{B}(q,\ell,2,\bar\beta)$, where $\bar\beta$ is a
nonsquare in $\F_{q^2}$.
\end{cor}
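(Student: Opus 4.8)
The plan is to read off the desired isotopism directly from Theorem~\ref{thm:isotopyPqlBH} by inverting the transformation recorded in part~$iii)$ of Proposition~\ref{prop:DualTranspIsot}. Set $\S_1=P(q,\ell)$ and $\S_2=\overline{B}(q,\ell,2,\bar\beta)$, so that $\S_1^{t*}=P(q,\ell)^{t*}$ and $\S_2^{t*}=\overline{B}(q,\ell,2,\bar\beta)^{t*}$. Since the dual--transpose correspondence of Proposition~\ref{prop:DualTranspIsot}~$iii)$ is a biconditional, an isotopism between the symplectic versions $\S_1^{t*}$ and $\S_2^{t*}$ automatically produces one between the commutative originals $\S_1$ and $\S_2$; the only task is to solve explicitly for its components.

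First I would invoke Theorem~\ref{thm:isotopyPqlBH}, whose proof (built on the maps $\phi,\psi$ introduced in Theorem~\ref{prop:isotopia} and on the map $h$) shows that $(\phi,h^{-1},\psi)$ is an isotopism between $P(q,\ell)^{t*}=\S_1^{t*}$ and $\overline{B}(q,\ell,2,\bar\beta)^{t*}=\S_2^{t*}$. By Proposition~\ref{prop:DualTranspIsot}~$iii)$, a triple $(M,N,L)$ is an isotopism between $\S_1$ and $\S_2$ precisely when $(N,\overline{L}^{\,-1},\overline{M}^{\,-1})$ is an isotopism between $\S_1^{t*}$ and $\S_2^{t*}$. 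I would therefore impose $(N,\overline{L}^{\,-1},\overline{M}^{\,-1})=(\phi,h^{-1},\psi)$ and solve for $M,N,L$.

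Solving the three matching conditions is where the bookkeeping with the conjugation map $T$ matters. From $N=\phi$ there is nothing to do. From $\overline{L}^{\,-1}=h^{-1}$ I get $\overline{L}=h$, and applying $T$ once more (which is an involution, since $\overline{\overline{\varphi}}=\varphi$) gives $L=\overline{h}=\bar h$. From $\overline{M}^{\,-1}=\psi$ I get $\overline{M}=\psi^{-1}$, and then $M=\overline{\psi^{-1}}=\bar\psi^{-1}$, where the last equality uses the identity $\overline{\varphi^{-1}}=(\overline{\varphi})^{-1}$ of~(\ref{form:mapT}). Hence $(M,N,L)=(\bar\psi^{-1},\phi,\bar h)$, and by the ``if'' direction of Proposition~\ref{prop:DualTranspIsot}~$iii)$ this triple is an isotopism between $P(q,\ell)$ and $\overline{B}(q,\ell,2,\bar\beta)$, as claimed.

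There is no genuine obstacle here: the statement is a pure corollary of the already-established symplectic isotopism together with the behaviour of isotopisms under the $t*$ operation. The one point demanding care---and the only place an error could creep in---is the correct handling of the order of inversion and conjugation in each component, for which the involutivity of $T$ and relation~(\ref{form:mapT}) are exactly what is needed. It is also worth recording that $\bar\beta\in\F_{q^2}$ is a nonsquare, inherited verbatim from the choice made in the proof of Theorem~\ref{thm:isotopyPqlBH}.
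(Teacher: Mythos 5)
Your proposal is correct and follows exactly the paper's route: the paper also obtains the corollary by combining the isotopism $(\phi,h^{-1},\psi)$ between $P(q,\ell)^{t*}$ and $\overline{B}(q,\ell,2,\bar\beta)^{t*}$ from Theorem~\ref{thm:isotopyPqlBH} with the biconditional in part~$iii)$ of Proposition~\ref{prop:DualTranspIsot}. Your explicit solving of $(N,\overline{L}^{\,-1},\overline{M}^{\,-1})=(\phi,h^{-1},\psi)$ for $(M,N,L)=(\bar\psi^{-1},\phi,\bar h)$, using the involutivity of conjugation and the identity $\overline{\varphi^{-1}}=(\overline{\varphi})^{-1}$ from~(\ref{form:mapT}), is precisely the bookkeeping the paper leaves implicit.
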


\begin{remark}
{\rm Note that, since $\bar\psi^{-1} \neq \phi$, the above isotopism
is not a strong isotopism.}
\end{remark}

\section{Strong Isotopism}
In this section we will prove that the isotopic presemifields
$P(q,\ell)$ and $\overline{B}(q,\ell,2,\bar\beta)$ of Corollary
\ref{cor:isotopy}, are strongly isotopic if and only if $q\equiv
1(mod\,4)$. Let us start by proving the following.

\begin{theorem}\label{thm:strongisotopy}
If $q\equiv 1(mod\,4)$, then the commutative presemifields
$P(q,\ell)$ and $\overline{B}(q,\ell,2,\bar\beta)$ of Corollary
\ref{cor:isotopy} are strongly istopic.
\end{theorem}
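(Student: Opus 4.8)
The plan is to verify the spread-set criterion for strong isotopy in Corollary \ref{cor:IsotopyComSimpl-spreads}, applied to $\S_1=P(q,\ell)$ and $\S_2=\overline B(q,\ell,2,\bar\beta)$: it suffices to produce a single $\F_p$--linear permutation $H$ of $\F_{q^{2\ell}}$ with $S_2^{t*}=H\,S_1^{t*}\,\overline H$, where $S_1^{t*},S_2^{t*}$ are the spread sets of $P(q,\ell)^{t*}$ and $\overline B(q,\ell,2,\bar\beta)^{t*}$ from Propositions \ref{prop:MultSimplBR1} and \ref{prop:MultSimplBR}. As in the proof of Theorem \ref{thm:isotopyPqlBH}, $S_1^{t*}$ is represented by the $q$--polynomials $x\mapsto Ax+B\omega x^{q^2}+B^{q^{2\ell-2}}\omega x^{q^{2\ell-2}}$ with $A,B\in\F_{q^\ell}$, and Theorem \ref{thm:isotopyPqlBH} together with Proposition \ref{prop:IsotopicSemifAndSpreadSets} yields the factorisation $S_2^{t*}=\psi\,S_1^{t*}\,\phi^{-1}$. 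The whole task is thus to replace the mismatched pair $(\psi,\phi^{-1})$ by a genuinely conjugate pair $(H,\overline H)$, at the cost of a symmetry of $S_1^{t*}$.

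First I would record two facts about $\psi(x)=\frac{\omega}{\xi}x+x^{q^\ell}$ and $\phi(x)=x-\frac{\omega}{\xi^{q^\ell}}x^{q^\ell}$. Using the conjugation rule for $q$--polynomials from (\ref{form:mapT}) together with $\omega^{q^\ell}=-\omega$ and $\xi^{q^\ell+1}=\sigma=\omega^2$, one checks that $\overline\psi=\psi$ and $\phi^{-1}=t_{c_0}\circ\psi$, where $t_c\colon x\mapsto cx$ (so $\overline{t_c}=t_c$) and $c_0=\xi/(2\omega)$; that is, $\psi$ and $\phi^{-1}$ differ merely by the scalar $c_0$, which is precisely why the isotopism of Corollary \ref{cor:isotopy} fails to be strong. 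I would then absorb this scalar into a ``twisted symmetry'' of $S_1^{t*}$, namely look for $e\in\F_{q^{2\ell}}^{*}$ with $t_e\,S_1^{t*}\,t_{c_0 e}=S_1^{t*}$ and put $H=\psi\circ t_{e^{-1}}$. Since then $\overline H=t_{e^{-1}}\circ\psi$ and, from the twisted symmetry, $t_{e^{-1}}S_1^{t*}t_{e^{-1}}=S_1^{t*}t_{c_0}$, one obtains $H\,S_1^{t*}\,\overline H=\psi\,S_1^{t*}\,t_{c_0}\psi=\psi\,S_1^{t*}\,\phi^{-1}=S_2^{t*}$, so every admissible $e$ produces a strong isotopism.

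It remains to find such an $e$. Matching the coefficients of $x$, $x^{q^2}$ and $x^{q^{2\ell-2}}$ in the representative form of $S_1^{t*}$, the condition $t_e\,S_1^{t*}\,t_{c_0 e}=S_1^{t*}$ collapses to the two scalar requirements $c_0e^2\in\F_{q^\ell}$ and $e^{q^2-1}\in\F_{q^\ell}$ (the remaining compatibility $1/c_0\in\F_{q^2}$ being automatic). Choosing $e\in\F_{q^2}$ trivialises the second, while the first becomes $(e^{q-1})^2=c_0^{\,1-q}=-\xi^{1-q}$, using $\omega^q=-\omega$ and $2\in\F_q$. As $e\mapsto e^{q-1}$ maps $\F_{q^2}^{*}$ onto the cyclic group $\mu_{q+1}$ of $(q+1)$--th roots of unity, such an $e$ exists if and only if $-\xi^{1-q}$ is a square in $\mu_{q+1}$, and this is the crux of the proof and the sole point at which the hypothesis on $q$ intervenes. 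Because $\bar\beta$, hence $\xi=\bar\beta^{-1}$, is a nonsquare of $\F_{q^2}$ (a nonsquare of $\F_{q^{2\ell}}$ lying in $\F_{q^2}$ remains a nonsquare there, as $\ell$ is odd), we have $\xi^{(q^2-1)/2}=-1$, so $(-\xi^{1-q})^{(q+1)/2}=(-1)^{(q+1)/2}\xi^{(1-q^2)/2}=(-1)^{(q+1)/2+1}$, which equals $1$ exactly when $(q+1)/2$ is odd, i.e.\ when $q\equiv1\,(mod\,4)$. For such $q$ the element $e$ exists and $H=\psi\circ t_{e^{-1}}$ is the desired strong isotopism. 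The main obstacle is exactly this last root-of-unity squareness computation; the remainder is routine bookkeeping with the conjugation map $T$ and the explicit spread set.
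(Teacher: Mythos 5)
Your proof is correct and takes essentially the same route as the paper: both reduce to the criterion $S_2=\psi S_1\phi^{-1}$ of Corollary \ref{cor:IsotopyComSimpl-spreads} and Theorem \ref{thm:isotopyPqlBH}, absorb the scalar discrepancy between $\psi$ and $\phi^{-1}$ by a square root in $\F_{q^2}$ that commutes with the $\F_{q^2}$--linear spread set, and use $q\equiv 1\,(mod\,4)$ precisely to guarantee that square root exists. Indeed, with the normalization $c_0e^2=1$ your $H=\psi\circ t_{e^{-1}}$ is literally the paper's $H=\overline{\phi}^{\,-1}\circ t_b$ with $b^2=\rho=2\omega\bar\beta$, and your root-of-unity condition $(-\xi^{1-q})^{(q+1)/2}=1$ is equivalent to the paper's observation that $\rho$ is a square in $\F_{q^2}$.
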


\begin{proof}
By Corollary \ref{cor:IsotopyComSimpl-spreads}, the two involved
presemifields are strongly isotopic if and only if there exists an
invertible $\F_p$--linear map $H$ of $\F_{q^{2\ell}}$, such that
$HS_1\overline H=S_2$, where $S_1$ and $S_2$ are the spread sets
associated with $P(q,\ell)^{t*}$ and
$\overline{B}(q,\ell,2,\bar\beta)^{t*}$, respectively. By the proof
of Theorem \ref{thm:isotopyPqlBH} and by Proposition
\ref{prop:IsotopicSemifAndSpreadSets}, we have that $\psi
S_1\phi^{-1}=S_2$, where $$\psi:x\mapsto \omega\bar\beta
x+x^{q^{\ell}} \mbox{\quad and \quad}\phi^{-1}:x\mapsto \frac
12(x+\omega\bar\beta^q x^{q^{\ell}}),$$ with the choices of
$\bar\beta$ and $\xi$ as in Theorem \ref{thm:isotopyPqlBH}. Recall
that $\omega\bar\beta\in\F_{q^2}\setminus\F_q$, $\bar\beta$ is a
nonsquare in $\F_{q^{2\ell}}$, $\omega^2=\sigma\in\F_q$ and
$\bar\beta^{q+1}=\frac 1 \sigma$.

Let $\rho=2\omega\bar\beta$ and note that $\bar\phi^{-1}(\rho
x)=\psi(x)$, i.e. $\bar\phi^{-1}\circ t_\rho=\psi$, where
$t_\rho(x)=\rho x$.

Since $q\equiv 1(mod\,4)$ and $\omega^{q-1}=-1$, we have that
$\omega$ is a nonsquare in $\F_{q^2}$, and hence
$\rho=2\omega\bar\beta$ is a square in $\F_{q^2}$. Let
$b\in\F_{q^2}$ such that $b^2=\rho$ and let
$H(x)=\bar\phi^{\,-1}(bx)$, i.e. $H=\bar\phi^{\,-1}\circ t_b$
is an invertible $\F_p$--linear map of $\F_{q^{2\ell}}$. Then, by
(\ref{form:mapT}), we get $$HS_1\overline H=(\bar\phi^{\,-1}\circ
t_b)S_1(t_b\circ\phi^{\,-1}).$$

Since the elements of $S_1$ are $\F_{q^2}$--linear maps of
$\F_{q^{2\ell}}$ and $b\in\F_{q^2}$ we have $$HS_1\overline
H=(\bar\phi^{\,-1}\circ t_{b^2})S_1\phi^{\,-1}=(\bar\phi^{\,-1}\circ
t_{\rho})S_1\phi^{-1}=\psi S_1\phi^{-1}=S_2.$$

This proves the theorem.
\end{proof}

Finally, we can prove

\begin{theorem}\label{thm:nonstrongisotopy}
If $q\equiv -1(mod\,4)$, then the commutative presemifields
$P(q,\ell)$ and $\overline{B}(q,\ell,2,\bar\beta)$ of Corollary
\ref{cor:isotopy} are not strongly istopic.
\end{theorem}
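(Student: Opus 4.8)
The plan is to use Corollary \ref{cor:IsotopyComSimpl-spreads}, which reduces strong isotopy of $P(q,\ell)$ and $\overline B(q,\ell,2,\bar\beta)$ to the existence of an invertible $\F_p$--linear $H$ with $HS_1\overline H=S_2$, where $S_1,S_2$ are the spread sets of $P(q,\ell)^{t*}$ and $\overline B(q,\ell,2,\bar\beta)^{t*}$. From the proof of Theorem \ref{thm:isotopyPqlBH} we already have $S_2=\psi S_1\phi^{-1}$. First I would record two short identities, both verified by direct computation using $\omega^q=-\omega$, $\bar\beta^{q+1}=1/\sigma$ and $\xi=\bar\beta^{-1}$: namely $\overline\psi=\psi$ and $\psi\circ\phi=t_\rho$, where $t_\rho$ is multiplication by $\rho:=2\omega\bar\beta\in\F_{q^2}$. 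Writing $H=\psi\circ U$ and using that every element of $S_1$ is $\F_{q^2}$--linear (so that $t_{\rho^{-1}}$ commutes with $S_1$), the equation $HS_1\overline H=\psi S_1\phi^{-1}$ collapses to
\[
U\,S_1\,\overline U=t_{\rho^{-1}}S_1 .
\]
Thus it suffices to show that, for $q\equiv-1\,(\mathrm{mod}\ 4)$, no invertible $\F_p$--linear $U$ satisfies this relation.

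Next I would pin down the square class of $\rho$. Since $\omega^{q-1}=-1$ and $(q+1)/2$ is even when $q\equiv-1\,(\mathrm{mod}\ 4)$, the element $\omega$ is a square in $\F_{q^2}$; moreover $\bar\beta$ is a nonsquare while $2\in\F_q^*$ is a square in $\F_{q^2}$ (as is every element of $\F_q^*$). Hence $\rho=2\omega\bar\beta$ is a nonsquare in $\F_{q^2}$, equivalently $\mathrm N(\rho):=\mathrm N_{\F_{q^2}/\F_q}(\rho)$ is a nonsquare in $\F_q$. This is the single place where $q\equiv-1$ is used; for $q\equiv1$ one gets $\rho$ a square, exactly what made Theorem \ref{thm:strongisotopy} work.

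The key invariant is the $\F_q$--determinant paired with the quadratic character $\chi_q$ of $\F_q$. A direct check using Proposition \ref{prop:MultSimplBR1} shows every $\varphi\in S_1$ is $\F_q$--linear and self--adjoint ($\overline\varphi=\varphi$). By Theorem \ref{thm:semilinearity} (both spread sets lie in $\mathrm{End}(\F_{q^{2\ell}},\F_q)$) the map $U$ is $\F_q$--semilinear; writing $U=U_0\circ\Sigma$ with companion $\Sigma\colon x\mapsto x^{p^i}$ and $U_0$ $\F_q$--linear, and using $\overline\Sigma=\Sigma^{-1}$ together with the fact that the conjugate of an $\F_q$--linear map is its trace--form adjoint (so $\det_{\F_q}\overline{U_0}=\det_{\F_q}U_0$), one obtains $\det_{\F_q}(U\varphi\overline U)=\det_{\F_q}(U_0)^2\,(\det_{\F_q}\varphi)^{p^i}$. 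Since $\chi_q$ is invariant under field automorphisms and $\det_{\F_q}(t_\rho)=\mathrm N(\rho)^\ell$, the bijection $\varphi\mapsto\varphi':=t_\rho U\varphi\overline U$ of $S_1\setminus\{0\}$ satisfies
\[
\chi_q(\det_{\F_q}\varphi')=\chi_q(\mathrm N(\rho))^{\ell}\,\chi_q(\det_{\F_q}\varphi)=-\,\chi_q(\det_{\F_q}\varphi),
\]
because $\ell$ is odd and $\mathrm N(\rho)$ is a nonsquare. Hence this bijection interchanges the elements of $S_1\setminus\{0\}$ having square $\F_q$--determinant with those having nonsquare $\F_q$--determinant, forcing these two subsets to have equal size.

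It then remains to contradict this equality, i.e.\ to show
\[
\Delta:=\sum_{\varphi\in S_1\setminus\{0\}}\chi_q(\det_{\F_q}\varphi)=\sum_{A,B\in\F_{q^\ell}}\chi_q(\det_{\F_q}\varphi_{A,B})\neq0 .
\]
For this I would compute the determinant from the (symmetric, cyclic--tridiagonal) Dickson matrix of $\varphi_{A,B}$ over $\F_{q^2}$: it has the form $P(A,B)+R(B)\eta$ with $P,R\in\F_q$, where $\eta^2=\sigma_0$ is a fixed nonsquare of $\F_q$ and $R(B)$ is a fixed nonzero scalar multiple of $\mathrm N_{\F_{q^\ell}/\F_q}(B)$; consequently $\det_{\F_q}\varphi_{A,B}=P^2-\sigma_0R^2$. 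The stratum $B=0$ contributes $q^\ell-1$ (those determinants are norms from $\F_{q^\ell}$, hence squares), and for $B\neq0$ the inner sum over $A$ is a quadratic character sum of the shape $\sum_A\chi_q\bigl(P(A,B)^2-c_B\bigr)$ with $c_B=\sigma_0R(B)^2\neq0$. Evaluating these sums, while tracking the dependence on $q\,(\mathrm{mod}\ 4)$ entering through $\chi_q(-1)$ and $\chi_q(\sigma_0)$, yields $\Delta\neq0$, the desired contradiction. The evaluation of this character sum is the main obstacle: $P(A,B)$ is not a free variable but a norm--minus--trace function of $A$, so one must control its value distribution (or combine a Weil--type bound with the guaranteed contribution of the $B=0$ stratum) to conclude that $\Delta$ cannot vanish.
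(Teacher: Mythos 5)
Your reduction is correct and, up to the point where you invoke the determinant invariant, it is sound: the identities $\overline\psi=\psi$ and $\psi\circ\phi=t_\rho$ with $\rho=2\omega\bar\beta$ check out (using $\omega^{q^\ell}=-\omega$, $\bar\beta^{q^{\ell+1}}=\bar\beta$ for $\ell$ odd, and $\omega^2\bar\beta^{q+1}=1$), the commutation of $t_{\rho^{-1}}$ with the $\F_{q^2}$--linear elements of $S_1$ is valid, the square--class computation ($\omega$ a square and $\bar\beta$ a nonsquare in $\F_{q^2}$ when $q\equiv-1\,(\mathrm{mod}\,4)$, hence $\rho$ a nonsquare) is exactly the dichotomy underlying Theorem \ref{thm:strongisotopy}, and the self--conjugacy $\overline{\varphi_{A,B}}=\varphi_{A,B}$ together with $\det_{\F_q}(U\varphi\overline U)=(\det_{\F_q}U_0)^2(\det_{\F_q}\varphi)^{p^i}$ and $\det_{\F_q}(t_\rho)=\mathrm N_{\F_{q^2}/\F_q}(\rho)^\ell$ does yield that a strong isotopism forces the two subsets of $S_1\setminus\{0\}$ with square, respectively nonsquare, $\F_q$--determinant to be equinumerous, i.e.\ forces $\Delta=0$.

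The genuine gap is the final step, which you yourself flag: nothing in the proposal establishes $\Delta\neq0$, and this is not a routine verification. There is no soft reason for it either: for the Desarguesian spread set $\{t_y\,|\,y\in\F_{q^{2\ell}}\}$ the analogous sum \emph{does} vanish (the determinants are the norms $\mathrm N_{\F_{q^{2\ell}}/\F_q}(y)$, which are equidistributed between squares and nonsquares, even though the stratum $y\in\F_{q^\ell}$ contributes only squares, just as your $B=0$ stratum does), so the bias of the $B=0$ stratum can in principle be exactly cancelled by the others, and your argument would then give no contradiction at all. The Weil--bound fallback is also not viable as stated: for fixed $B\neq0$ the function $A\mapsto P(A,B)$ coming from the Dickson determinant has degree comparable to $q^{\ell-1}$ in $A$, so character--sum bounds over $\F_{q^\ell}$ give nothing. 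By contrast, the paper's proof avoids any counting: writing $\psi^{-1}=\frac12\omega\bar\beta^q\,\overline\phi$, the strong--isotopy condition becomes $\delta GS_1\overline G=S_1$ with $G=\overline\phi H$, which by Theorem \ref{thm:semilinearity} is $\F_{q^2}$--semilinear, say $G(x)=\sum_{i=0}^{\ell-1}a_ix^{\sigma q^{2i}}$; testing this relation only on the maps $\varphi_{A,0}:x\mapsto Ax$ and comparing the coefficient of $x$ (using $A'\in\F_{q^\ell}$) yields $\bar\beta^qa_i^2+\bar\beta a_i^{2q^\ell}=0$ for every $i$, and the equation $x^{2q^\ell-2}=-\bar\beta^{\,q-1}$ has no solution in $\F_{q^{2\ell}}$ precisely when $q\equiv-1\,(\mathrm{mod}\,4)$, forcing $G=0$, a contradiction. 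You would need either to carry out the value--distribution analysis of $P(A,B)$ and actually prove $\Delta\neq0$, or to replace the global counting invariant by a pointwise test of this kind.
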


\begin{proof}
By way of contradiction, suppose that the two involved presemifields
are strongly isotopic. Then by Corollary
\ref{cor:IsotopyComSimpl-spreads}, there exists an invertible
$\F_p$--linear map $H$ of $\F_{q^{2\ell}}$, $q=p^h$, such that
$HS_1\overline H=S_2$, where $S_1$ and $S_2$ are the spread sets
associated with $\S_1^{t*}$ and $\S_2^{t*}$, respectively. In
particular
$$
S_1=\{\varphi_{A,B}:\ x\mapsto Ax+B^{q^2}\eta x^{q^2}+B\eta
x^{q^{2\ell-2}}|\ y=A+(B^{q^2}+B)\eta,\,y\in\F_{q^{2\ell}}\}.$$ By
Theorem \ref{thm:isotopyPqlBH}, $\psi S_1\phi^{-1}=S_2$, hence
$\psi^{-1}HS_1\overline H\phi=S_1$, where
$$\psi^{-1}:x\mapsto \frac
12(\omega\bar\beta^qx+x^{q^\ell})\quad\quad\phi:x\mapsto
x-\omega\bar\beta^q x^{q^{\ell}}$$ and $\psi^{-1}=\frac 12
\omega\bar\beta^q\,\overline\phi$. It follows that
\begin{equation}\label{form:semilin}
\delta G S_1\bar G=S_1,
\end{equation} where $\delta=\frac 12 \omega\bar\beta^q\in\F_{q^2}$ and $G=\overline\phi
H$. Since the elements of $S_1$ are $\F_{q^2}$--linear maps of
$\F_{q^{2\ell}}$, by Theorem \ref{thm:semilinearity} and Proposition
\ref{prop:IsotopicSemifAndSpreadSets}, we have that $G$ is an
invertible $\F_{q^2}$--semilinear map of $\F_{q^{2\ell}}$, with
companion automorphism $\sigma=p^e$.

Let $$G(x)=
\sum_{i=0}^{\ell-1}a_ix^{p^{2hi+e}}=\sum_{i=0}^{\ell-1}a_i x^{\sigma q^{2i}},$$ then
$$\overline{G}(x)=\sum_{i=0}^{\ell-1}a_{i}^{p^{2\ell h-2hi-e}} x^{p^{2\ell h-2hi-e}}=
\sum_{i=0}^{\ell-1}a_i^{\sigma^{-1}q^{2\ell-2i}}x^{\sigma^{-1}q^{2\ell-2i}}.$$

By (\ref{form:semilin}), the map $\delta
(G\circ\varphi_{A,0}\circ\bar G)$ belongs to $S_1$ for each
$A\in\F_{q^\ell}$. Then there exist $A',B'\in\F_{q^\ell}$ such that
$\delta (G(A(\bar G(x))))=\varphi_{A',B'}(x)$ for each
$x\in\F_{q^{2\ell}}$.

Since $$\delta (G(A(\bar
G(x))))=\delta\Big(\sum_{j=0}^{\ell-1}\sum_{i=0}^{\ell-1}A^{\sigma
q^{2j}}a_ja_i^{q^{2(\ell-i+j)}}x^{q^{2(\ell-i+j)}}\Big)=A'x+B'^{q^2}\eta
x^{q^2}+B'\eta x^{q^{2\ell-2}},$$ reducing the above polynomial
identity modulo $x^{q^{2\ell}}-x$ and by comparing the
coefficients of first degree, we get
$$\delta(A^\sigma a_0^2+A^{\sigma q^2}a_1^2+\dots+A^{\sigma
q^{2\ell-2}}a_{\ell-1}^2)=A'\in\F_{q^\ell}$$ for each
$A\in\F_{q^\ell}$, i.e.
$$A^\sigma (\delta a_0^2-\delta^qa_0^{2q^\ell})+A^{\sigma q^2}(\delta a_1^2-\delta^qa_1^{2q^\ell})+\dots+A^{\sigma
q^{2\ell-2}}(\delta a_{\ell-1}^2-\delta^qa_{\ell-1}^{2q^\ell})=0$$
for each $A\in\F_{q^\ell}$. This is equivalent to
$$(\bar\beta^qa_0^2+\bar\beta a_0^{2q^\ell})x+(\bar\beta^qa_1^2+\bar\beta
a_1^{2q^\ell})x^{q^2}+\dots(\bar\beta^qa_{\ell-1}^2+\bar\beta
a_{\ell-1}^{2q^\ell})x^{q^{2\ell-2}}=0
$$ for each $x\in\F_{q^\ell}$. Reducing the above polynomial
identity over $\F_{q^\ell}$ modulo $x^{q^\ell}-x$, we get
$$\bar\beta^qa_i^2+\bar\beta a_i^{2q^\ell}=0$$ for each
$i\in\{0,1,\dots,\ell-1\}$. If $a_i\ne 0$, then $a_i$ is a solution of
$$x^{2q^\ell-2}=-\bar\beta^{q-1}.$$ However, when $q\equiv -1(mod\,4)$, the
last equation admits no solution in $\F_{q^{2\ell}}$. Hence the
unique $\F_{q^2}$--semilinear map satisfying (\ref{form:semilin}) is
the zero one, a contradiction.
\end{proof}

\bigskip

\bigskip

\noindent G. Marino and O. Polverino \\
Dip. di Matematica\\
Seconda Universit\`a degli Studi di Napoli\\
I--\,81100 Caserta, Italy\\
{\em giuseppe.marino@unina2.it}, {\em olga.polverino@unina2.it}

\end{document}